\numberwithin{equation}{section}
\newtheorem{thm}{Theorem}[section]
\newtheorem{lemma}[thm]{Lemma}
\newtheorem{prop}[thm]{Proposition}
\newtheorem{cor}[thm]{Corollary}
\theoremstyle{definition}
\newcommand{\defeq}{\stackrel{\rm{def}}{=}}
\renewcommand{\Re}{\operatorname{\rm Re}\nolimits}
\renewcommand{\Im}{\operatorname{\rm Im}\nolimits}
\def \comp {\operatorname{comp}}
\def \supp {\operatorname{supp}}
\def \diam {\operatorname{diam}}
\def \dist {\operatorname{dist}}
\def \Vol {\operatorname{Vol}}
\def \vol {{\rm vol}}
\def \mfm {{\mathfrak M}}
\def \Real {{\mathbb R}}
\def \Complex {\mathbb{C}}
\def \meas {\operatorname{meas}}
\title []
{Schr\"odinger operators and the distribution of resonances in sectors
}
   \author { T.J. Christiansen}
\address{Department of Mathematics,
University of Missouri,
Columbia, Missouri 65211, USA} 
\email{christiansent@missouri.edu}
\begin{document}

\begin{abstract}
The purpose of this paper is to give some refined results about the distribution 
of resonances in potential scattering.  We use techniques 
and results from one 
and several complex variables, including properties of functions of completely
regular growth.  This enables us to find asymptotics for the distribution
of resonances in sectors for certain potentials and for
certain families of potentials.
\end{abstract}

\maketitle
%\today

\section{Introduction}
\label{s:int}

The purpose of this paper is to prove some results about the distribution 
of resonances in potential scattering.  In particular, we study the 
distribution of resonances in sectors and give
asymptotics of
the ``expected value'' of the number of resonances in certain settings.

More precisely, 
we consider the operator $-\Delta +V$, where 
$V\in L^{\infty}_{\comp}(\Real^d)$ and $\Delta $ is the (non-positive) 
Laplacian.  Then, except for a finite number of values of $\lambda$, $
R_V(\lambda)= (-\Delta +V-\lambda^2)^{-1},$ 
$\Im \lambda >0$, is a bounded  operator on $L^2(\Real^d)$ for
$\lambda $ in the upper half plane.  When $d$ is odd and 
$\chi\in L_{\comp}^{\infty}(\Real^d)$ satisfies 
$\chi V=V$,  $\chi R_V(\lambda) \chi$ has a meromorphic continuation to 
the lower half plane.  The poles of $\chi R_V(\lambda) \chi$ are called
{\em resonances}, and are independent of choice of $\chi$ 
satisfying these hypotheses.  
Resonances are analogous to eigenvalues 
not only in their
appearance as poles of the resolvent, but also because they appear in 
trace formulas much as eigenvalues do \cite{b-g-r,gu-zwan,sttwg}. 
Physically, they may be thought of as corresponding to decaying waves.

Let $n_V(r)$ denote the number of resonances of $-\Delta +V$, counted
with multiplicity, with norm at most $r$.  When $d=1$, asymptotics of
$n_V(r)$ are known:
$$\lim _{r\rightarrow \infty}\frac{n_{V }(r)}{r}
=\frac{2}{\pi }\diam (\supp(V))
$$
\cite{zw1}; see also
\cite{froese1, regge,simon}.  
Moreover, ``most'' of the resonances occur in sectors about the 
real axis, in the sense that for any $\epsilon>0$, 
$$\lim_{r\rightarrow \infty}\frac{ \# \{ \lambda_j\;
 \text{pole of $R_V(\lambda)$}:
|\arg \lambda_j -\pi|<\epsilon\; \text{or } |\arg \lambda_j-2\pi|<\epsilon
\}}{r}=\frac{2}{\pi}\diam (\supp (V)),$$
\cite{froese1}.
These results are valid for complex-valued $V$.
The case $d=1$ is exceptional, though:
in higher dimensions much less is known.

Now we turn to $d\geq 3 $ odd, where the question is more subtle.
If $V\in L^{\infty}(\Real^d)$ has support in $\overline{B}(0,a)=
\{ x\in \Real^d:\; |x|\leq a\}$, then 
\begin{equation}
\label{eq:stupperbd}
d\int_0^r \frac{n_V(t)-n_V(0)}{t}dt\leq c_d a^d r^d +o(r^d).
\end{equation}
where $c_d$ is defined in (\ref{eq:cd}) and depends only on the dimension.
Zworski \cite{zwodd} showed that such a bound holds, and 
Stefanov \cite{stefanov} identified the optimal constant.
There are examples for which equality holds in (\ref{eq:stupperbd}),
\cite{zwradpot, stefanov}.
Lower bounds have proved more elusive.  The current best known general
quantitative lower bound is for non-trivial real-valued 
$V\in C_c^{\infty}(\Real)$ 
 \begin{equation}\label{eq:>0}
\lim \sup _{r\rightarrow \infty}\frac{n_{V}(r)}{r}>0
\end{equation}
\cite{sBlb}.
On the other hand, there are nontrivial complex-valued
 potentials $V$ for which $\chi R_V(\lambda)\chi$ has no poles, 
\cite{counterex}.  

We wish to single out the set for which asymptotics actually hold
in (\ref{eq:stupperbd}).   This is the set defined, for 
 $a>0$, as
\begin{equation}\label{eq:mv}
\mfm_a = \{ V\in L^{\infty}(\Real^d): \; \supp V \subset 
\overline{B} (0,a)
\; \text{and} \;
n_V(r) = c_d a^dr^d+o(r^d) \; \text{as $r\rightarrow \infty$} \}.
\end{equation}
We remark that it is equivalent to require, as $r\rightarrow \infty$,
$n_V(r) = c_d a^dr^d+o(r^d)$ or $d\int_0^rt^{-1}(n_V(t)-n_V(0)) \; dt = c_d a^d r^d
+ o(r^d)$.  
The set $\mfm_a$ contains infinitely many radial potentials.  By results of 
\cite{zwradpot, stefanov}, this set contains any potential of the 
form $V(x)=v(|x|)$, where $v\in C^2([0,a])$ is 
real-valued, $v(a)\not = 0$, and 
$V(x)=0$ for $|x|>a$.  Additionally, it contains infinitely
many complex-valued potentials which are isoresonant with these 
real-valued radial potentials \cite{isophasal}. 

We now can state some results.
For the first, we set, for $\varphi<\theta$,
$n_V(r,\varphi,\theta)$ to be the set of poles of $R_V(\lambda)$,
counted with multiplicity, with 
norm at most $r$ and with argument between $\varphi$ and $\theta$ inclusive.
 \begin{prop}\label{p:angles}
Let $V\in \mfm_a$.  Then, if $0<\varphi <\theta< \pi$,
$$n_V(r,\pi +\varphi,\pi+\theta)= \frac{1}{2\pi d}
\tilde{s}_d(\varphi, \theta)r^d a^d+ o(r^d)\; 
\text{as $r\rightarrow \infty$}$$
where 
$$\tilde{s}_d(\varphi,\theta)=h_d'(\theta)-h_d'(\varphi)+
d^2\int_{\varphi}^{\theta} h_d(s) ds,$$
and $h_d(\theta)$ is as defined in (\ref{eq:hd}).
%We have, for any $\pi<\theta<2\pi$, 
%\begin{equation}\label{eq:angle1}
%n_V(r,\pi,\theta) = [h_d'(\theta-\pi)+ d^2\int_{0}^{\theta-\pi} h_d(s) ds]a^dr^d
%+o(r^d)
%\end{equation}
%and, for any $\pi<\phi<2\pi$,
%\begin{equation}\label{eq:angle2}
%n_V(r,\phi,2\pi)=[-h_d'(\phi-\pi)+ d^2\int_{0}^{\theta-\pi} h_d(s) ds]a^dr^d
%+o(r^d).
%\end{equation}
\end{prop}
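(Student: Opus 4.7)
The plan is to reduce the question to the angular distribution of zeros of an entire function of completely regular growth in the sense of Levin--Pfluger. To this end, I would form a regularized determinant
$$D_V(\lambda)=\det{}_{d}\bigl(I+VR_0(\lambda)\bigr),$$
where $R_0(\lambda)=(-\Delta-\lambda^2)^{-1}$ is continued to all of $\Complex$ and the subscript $d$ denotes a suitable $d$-th order regularization. Then $D_V$ is entire of order (at most) $d$, and, up to finitely many zeros in the closed upper half plane, its zeros counted with multiplicity in $\{\Im\lambda<0\}$ are exactly the resonances of $-\Delta+V$.

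Next I would pin down the indicator. The sharp upper bound underlying Stefanov's theorem \cite{stefanov} yields
$$\log|D_V(\lambda)|\le a^{d}\,h_d(\arg\lambda-\pi)\,|\lambda|^{d}+o(|\lambda|^{d}),$$
with $h_d$ the function defined in (\ref{eq:hd}); this $h_d$ vanishes on the upper half plane and is positive on the lower half plane, and its average encodes the constant $c_d$ (so that $\frac{d}{2\pi}\int_0^{2\pi}a^{d}h_d(\theta-\pi)\,d\theta = c_d a^{d}$). Combined with the hypothesis $V\in\mfm_a$ and Jensen's formula applied to $D_V$, this bound becomes saturated in integrated form: the sharp count $n_V(r)=c_d a^{d}r^{d}+o(r^{d})$ forces the upper indicator $h_{D_V}(\theta)$ to coincide with $a^{d}h_d(\theta-\pi)$ a.e., and then the Levin regularity theorem upgrades $D_V$ to a function of completely regular growth of order $d$.

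Once completely regular growth is in hand, the conclusion follows directly from Levin's theorem on the angular distribution of zeros: for any $\varphi,\theta$ at which $h_d$ is differentiable,
$$n_V(r,\pi+\varphi,\pi+\theta)=\frac{r^{d}}{2\pi d}\Bigl[h_{D_V}'(\pi+\theta)-h_{D_V}'(\pi+\varphi)+d^{2}\!\int_{\pi+\varphi}^{\pi+\theta}\!h_{D_V}(s)\,ds\Bigr]+o(r^{d}),$$
which, after substituting $h_{D_V}(\,\cdot\,)=a^{d}h_d(\,\cdot\,-\pi)$ and changing variables, becomes the claimed asymptotic $\tfrac{1}{2\pi d}\tilde{s}_d(\varphi,\theta)r^{d}a^{d}+o(r^{d})$. (Finitely many zeros of $D_V$ in the closed upper half plane affect only $o(r^{d})$ terms and so may be ignored.)

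The main obstacle is the step from the \emph{global} counting asymptotic $V\in\mfm_a$, together with the a priori upper bound, to \emph{completely regular growth}. The saturation-of-Jensen argument is what forces the indicator of $D_V$ to equal its a priori upper bound pointwise, and it is precisely this pointwise equality that allows the total density $c_d a^{d}$ to be decomposed angularly; without the sharp hypothesis $V\in\mfm_a$ no such angular information is available.
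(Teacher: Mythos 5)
Your overall strategy matches the paper's: use Stefanov's sharp upper bound on the indicator, the saturation forced by $V\in\mfm_a$ via a Jensen-type identity, conclude completely regular growth, and then read off the angular zero density from Levin's theorem (Theorem~\ref{thm:crg} here). The genuinely different choice is the analytic object you work with. You use a regularized determinant $D_V$, which is \emph{entire}, and thereby appeal to Levin's Chapter~IV, Theorem~3 (the classical CRG criterion for entire functions). The paper instead works with $s_V=\det S_V$, which is only \emph{meromorphic}, multiplies by a rational factor to clear the finitely many poles in the upper half plane, and then must prove a new CRG criterion for such meromorphic functions (Proposition~\ref{p:showcrg}); the paper explicitly notes it could not find this in the literature. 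If your route were fully justified it would buy a shorter proof, since the needed CRG statement for entire functions is already known.

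However, there is a real gap in the step that pins down the indicator of $D_V$. You assert that Stefanov's theorem ``yields'' $\log|D_V(\lambda)|\le a^dh_d(\arg\lambda-\pi)|\lambda|^d+o(|\lambda|^d)$, but Stefanov's bound (Theorem~\ref{t:stefanov}) concerns $\ln|s_V(re^{i\theta})|$ for $0\le\theta\le\pi$, not $\ln|D_V|$ in the lower half plane. Passing from one to the other requires the identity $s_V(\lambda)=D_V(-\lambda)/D_V(\lambda)$ (modulo explicit elementary factors) \emph{and} sharp two-sided control of $\ln|D_V|$ in the closed upper half plane, uniformly as $\theta\downarrow 0$ and $\theta\uparrow\pi$; in particular you would need $\ln|D_V(re^{i\theta})|=o(r^d)$ uniformly for $\theta\in[0,\pi]$. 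None of this is established, and it is precisely the delicate part. The paper avoids it by working with $s_V$ itself, for which the real-axis behaviour is well controlled (equation~(\ref{eq:simplebd}) and Lemma~\ref{l:derivest}: $|s_V|\to 1$ and $\frac{d}{d\lambda}\ln s_V=O(|\lambda|^{d-2})$ on $\Real$), and then exploits (\ref{eq:frid}) rather than the classical Jensen formula. A secondary point: you conclude Levin's angular formula ``for any $\varphi,\theta$ at which $h_d$ is differentiable,'' but the proposition asserts the asymptotic for \emph{all} $0<\varphi<\theta<\pi$; to remove the exceptional set one needs $h_d\in C^1(0,\pi)$, which is the content of Lemma~\ref{l:hdbasics} and is not addressed in your sketch.
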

If $V$ is real-valued, then $\lambda_0$ is a resonance 
of $-\Delta +V$ if and only if 
$-\overline{\lambda_0}$ is a resonance.  In this case for
$V\in \mfm_a$ and  $0<\theta<\pi$ 
\begin{equation}
\label{eq:nearpi}
n_V(r,\pi,\pi+ \theta)=\frac{1}{2\pi d}\left[
h_d'(\theta)+ d^2\int_{0}^{\theta} h_d(s) ds\right] a^dr^d
+o(r^d).
\end{equation}
Here, as elsewhere in this paper, we are concerned with the behavior as
$r\rightarrow \infty$.  Thus, one should understand that statements of
the type $f(r)=g(r)+o(r^p)$ are statements which hold for 
$r$ sufficiently large.

Corollary \ref{cor:nearreal} shows that (\ref{eq:nearpi})
 holds for any $V\in \mfm_a$.
These results show that any potential 
in $\mathfrak{M}_a$ must have resonances distributed regularly in sectors,
as well as being distributed regularly in balls centered at the origin.
A result like this proposition and Corollary \ref{cor:nearreal} is,
 for the special potentials of the form $V(x)=v(|x|)$ mentioned
earlier, implicit in the papers of Zworski \cite{zwradpot}
and Stefanov \cite{stefanov}.  Here we derive it as a corollary of some 
complex-analytic results, and note that it holds for {\em any} potential 
$V\in \mfm_a$.  We note that this proposition could, in fact, follow as 
a corollary to Theorem \ref{thm:second}.  However, we prefer to give a 
separate proof that uses standard results for functions of completely
regular growth.  

In the following theorem we use the notation
$N_V(r)=\int_0^r \frac{1}{t}(n_V(t)-n_V(0))dt$.  This theorem shows that
there are many potentials for which something close to the optimal upper
bound on the resonances is achieved.
\begin{thm}\label{thm:first}
Let $\Omega \subset \Complex^p$ be an open connected set.  
Suppose $V(z)=V(z,x)$ 
is holomorphic in $z\in\Omega$ and,
 for each $z\in \Omega$, $V(z,x)\in L^{\infty}(\Real^d)$,
and $
V(z,x)=0$ if $|x|>a$.  Suppose in addition that for some $z_0\in 
\Omega $, $V(z_0)\in \mfm_a$.  Then there is a pluripolar set $E\subset \Omega$
so that $$\lim \sup _{r \rightarrow \infty} \frac{N_{V(z)}(r)}{r^d}
= \frac{c_d a^d}{d} \; \text{for all}\; z\in \Omega \setminus E.$$
Moreover, for any $\theta>0$, $\theta<\pi$, there is 
a pluripolar set $E_\theta$ so that 
$$\lim \sup _{r \rightarrow 
\infty} \frac{N_{V(z)}(r,\pi,\pi+\theta)}{r^d} \geq \lim_{\epsilon \downarrow 0}
\frac{a^d}{4\pi d^2} h_d'(\epsilon)
$$
for all $z\in \Omega \setminus E_\theta$.
\end{thm}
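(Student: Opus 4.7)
My proposed strategy is a pluripolar-genericity argument applied to a jointly holomorphic scattering determinant. I would first fix, for each $V$ supported in $\overline{B}(0,a)$, an entire function $D_V(\lambda)$ of order at most $d$ (a regularized Fredholm determinant of the kind used by Zworski and Stefanov in establishing \eqref{eq:stupperbd}) whose zeros in $\{\Im\lambda<0\}$ are the resonances of $-\Delta+V$ with multiplicity, and whose log growth is bounded above by $c_d a^d |\lambda|^d + o(|\lambda|^d)$. When $V(z,x)$ is holomorphic in $z\in\Omega$, $D_{V(z)}(\lambda)$ is jointly holomorphic on $\Omega\times\Complex$, so every circular average or ray evaluation of $\log|D_{V(z)}|$ is a plurisubharmonic (psh) function of $z$.

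For the first conclusion, I would set
$$u_r(z)\defeq\frac{1}{r^d}\cdot\frac{1}{2\pi}\int_0^{2\pi}\log|D_{V(z)}(re^{i\theta})|\,d\theta.$$
Each $u_r$ is psh in $z$, Jensen's formula identifies $u_r(z)$ with $N_{V(z)}(r)/r^d$ up to a bounded correction, and \eqref{eq:stupperbd} gives $u_r(z)\le c_d a^d/d + o(1)$ uniformly on compacts. Setting $u=\limsup_{r\to\infty}u_r$ and letting $u^*$ be its upper-semicontinuous regularization, $u^*$ is psh, bounded above by $c_d a^d/d$, and equal to $c_d a^d/d$ at $z_0$ since $V(z_0)\in\mfm_a$ forces $u(z_0)=c_d a^d/d$. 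Connectedness of $\Omega$ and the maximum principle then give $u^*\equiv c_d a^d/d$, and the standard theorem that $\{u<u^*\}$ is pluripolar supplies the pluripolar exceptional set $E$.

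For the sectorial lower bound I would run an analogous argument on the ray-evaluation psh functions $v_{r,\theta_0}(z)\defeq r^{-d}\log|D_{V(z)}(re^{i\theta_0})|$ with $\theta_0=\pi+\epsilon$ for small $\epsilon>0$. At $z_0$, the completely regular growth of $D_{V(z_0)}$ implicit in $V(z_0)\in\mfm_a$ (cf.\ Proposition~\ref{p:angles}) pins $\limsup_r v_{r,\theta_0}(z_0)$ to the maximal indicator value $a^d h_d(\epsilon)$, and the same psh-genericity argument transfers this equality off a pluripolar set $E'_\epsilon$. A Carleman-type formula for zeros in a sector bounded by rays on which the indicator is saturated then converts this per-ray equality into a lower bound on $N_{V(z)}(r,\pi+\epsilon,\pi+\theta)/r^d$ of the required form. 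Letting $\epsilon\downarrow 0$ along a countable sequence and unioning the associated exceptional sets yields the pluripolar $E_\theta$ and the stated bound $\lim_{\epsilon\downarrow 0}a^d h_d'(\epsilon)/(4\pi d^2)$.

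The delicate part is the second conclusion. The psh machinery readily supplies upper bounds on $\log|D_{V(z)}|$ off pluripolar sets, but a lower bound on a sectorial zero count needs \emph{saturation} of growth on the bounding rays of the sector, not just the generic one-sided inequality. The resolution is that at $z_0$ the growth is saturated as a genuine limit in every direction (complete regular growth), so the psh-genericity maximum principle upgrades to per-ray equality off a pluripolar set; what remains is a careful Carleman/Levin bookkeeping to pass from this per-ray equality to a sectorial lower bound while controlling the errors uniformly as $\epsilon\downarrow 0$.
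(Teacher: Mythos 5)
Your argument for the first conclusion is correct and is essentially the paper's argument: the paper works with $\Psi(z,r)=\frac{1}{2\pi}\int_0^\pi\ln|s_{V(z)}(re^{i\theta})|\,d\theta$ and identity~(\ref{eq:frid}) in place of your entire Fredholm determinant $D_V$ and the full-circle Jensen formula, but the psh-in-$z$ structure, Stefanov's upper bound, saturation at $z_0$, and \cite[Proposition~1.39]{l-g} (Proposition~\ref{p:l-g}) are the same.

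For the second conclusion there is a genuine gap. What Proposition~\ref{p:l-g} hands you, applied to the ray-evaluations $v_{r,\theta_0}(z)=r^{-d}\log|D_{V(z)}(re^{i\theta_0})|$, is that the \emph{indicator} of $D_{V(z)}$ at $\theta_0$ equals $a^d h_d(\epsilon)$ off a pluripolar set; this is a statement about a $\limsup$ along $r\to\infty$, not a statement that $v_{r,\theta_0}(z)$ converges. The Carleman/Levin machinery (Theorem~\ref{thm:crg}, or the generalized Jensen identity of Lemma~\ref{l:newgfj}) converts growth information into sectorial zero asymptotics only under \emph{completely regular growth}, which is precisely the assertion that the convergence is in $\lim$ (off a set of $r$ of zero relative measure), uniformly on closed subangles. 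That property is established at $z_0$ via Proposition~\ref{p:showcrg}, whose hypothesis is that $\liminf_r m_f(r)/r^d$ attains the maximal value---for generic $z\neq z_0$ the psh-genericity argument yields only $\limsup$-saturation, so the hypothesis of Proposition~\ref{p:showcrg} fails and completely regular growth does not transfer. Concretely, knowing that the indicator of $D_{V(z)}$ saturates at two bounding rays does not by itself bound from below the number of zeros between them; the terms in Lemma~\ref{l:newgfj} involve $\int_0^r\frac{\partial}{\partial\theta}J^t_f(\theta)\frac{dt}{t}$, which is not controlled by indicator values alone.

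The paper sidesteps this by choosing a quantity that is simultaneously psh in $z$ and \emph{directly} tied to a sectorial counting function without invoking complete regular growth at generic $z$: Lemma~\ref{l:intangles} shows that the $\theta$-averaged quantity $\int_0^\theta N_{V(z)}(r,\pi,\theta'+\pi)\,d\theta'$ equals $\Psi_2(z,r,\theta)+o(r^d)$, where $\Psi_2$ is manifestly psh in $z$. Proposition~\ref{p:l-g} then transfers $\limsup$-saturation of $r^{-d}\Psi_2(\cdot,r,\theta)$ from $z_0$ to generic $z$, and the final step uses only the elementary facts that $N_{V(z)}(r,\pi,\pi+\theta')$ is nondecreasing in $\theta'$ and $N_{V(z)}(r,\pi,\pi)=O(1)$ to drop the $\theta$-average and obtain the stated lower bound. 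If you want to repair your argument, you should replace the per-ray $v_{r,\theta_0}$ by a $\theta$-averaged psh observable of the type appearing in Lemma~\ref{l:intangles}, and use the monotonicity of the sectorial counting function rather than a Carleman-type formula to remove the average.
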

For example, one may take, for 
$z\in \Complex$, $V(z)=zV_1+(1-z)V_0$, where $V_0\in \mfm_a$ and
$V_1\in L^{\infty}(\Real^d)$ has support in $\overline{B}(0,a)$.  
Since $h_d'(0+)=\lim_{\epsilon \downarrow 0}h_d'(\epsilon)>0$, see 
Lemma \ref{l:hdbasics}, the second statement of the theorem is meaningful.
This result is of particular interest since resonances near the real 
axis are considered the more physically relevant ones.

We recall 
the definition of a pluripolar set in Section \ref{s:complex}.
Here we mention
that a 
pluripolar set is small.  A pluripolar 
set $E\subset \Complex^p$   has $\Real^{2p}$ Lebesgue measure $0$, and 
if $E\subset \Complex $ is pluripolar, $E\cap \Real$ has one-dimensional
Lebesgue measure $0$ 
( see, for example, \cite{l-g, ransford}).   Thus the statements of
Theorem \ref{thm:first} 
hold for ``most'' values of $z\subset \Omega$.

If we take a weighted average over a family of potentials,
a kind of expected value, we are able to 
find asymptotics analogous to those which hold for a potential 
in $\mfm_a$.
In the statement of the next theorem and later in the paper, we 
use the notation $d\mathcal{L}(z)= d\Re z_1 d\Im z_1\cdot\cdot \cdot 
d\Re z_p d\Im z_p$. 
\begin{thm} \label{thm:second}
 Suppose the hypotheses of Theorem \ref{thm:first} 
are satisfied.  Then 
for any $\psi \in C_c(\Omega)$, 
$$\int_{\Omega} \psi(z) n_{V(z)}(r) d\mathcal{L}(z) =c_d a^d r^d
 \int_{\Omega} \psi(z) d\mathcal{L}(z)
 +o(r^d)$$
as $r\rightarrow \infty$.
%Moreover, if $n_{(V(z)}(r,\varphi,\theta)$ measures the 
%number of poles in a sector, 
Additionally, we have,
for  $0<\varphi <\theta< \pi$,
$$\int_\Omega
\psi(z)n_{V(z)}(r,\varphi+\pi,\theta+\pi)d\mathcal{L}(z)=
 \frac{1}{2\pi d}\tilde{s}_d(\varphi, \theta )r^d a^d 
\int_\Omega \psi(z)d\mathcal{L}(z)+ o(r^d)$$
where $\tilde{s}_d$ is as defined in Proposition \ref{p:angles}.
Moreover,
 for
$0<\theta<\pi$, 
\begin{multline*}
\int_{\Omega}\psi(z) n_{V(z)}(r,\pi,\theta+\pi)d\mathcal{L}(z) \\
= 
\frac{1}{2\pi d}\left[h_d'(\theta)+ d^2\int_{0}^{\theta} h_d(s) ds\right]a^dr^d 
\int_{\Omega}\psi(z)d\mathcal{L}(z)
+o(r^d).
\end{multline*}
%and, for any $\pi<\varphi<2\pi$,
%\begin{equation}
%n_V(r,\phi,2\pi)=[-h_d'(\varphi-\pi)+ d^2\int_{0}^{\theta-\pi} h_d(s) ds]a^dr^d
%+o(r^d).
%\end{equation}
\end{thm}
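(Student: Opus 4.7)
The plan is to introduce a canonical jointly holomorphic function $\mcd(z,\lambda)$ (a regularized scattering determinant) on $\Omega\times\Complex$ whose zeros in $\lambda$ reproduce the resonances of $-\Delta+V(z)$ counted with multiplicity, and which is entire of order at most $d$ in $\lambda$ for each $z\in\Omega$. For each angle $\theta$, the function $u_r^{(\theta)}(z):=r^{-d}\log|\mcd(z,re^{i\theta})|$ is plurisubharmonic in $z$, and the standard scattering-theoretic upper bound (giving the angular indicator associated with Stefanov's optimal constant) yields, locally uniformly in $z$,
$$\limsup_{r\to\infty} u_r^{(\theta)}(z)\le c_d a^d h_d(\theta).$$
At $z_0$ the hypothesis $V(z_0)\in\mfm_a$ forces $\mcd(z_0,\cdot)$ to be of completely regular growth of order $d$ with indicator $c_d a^d h_d(\theta)$, so $u_r^{(\theta)}(z_0)\to c_d a^d h_d(\theta)$ as $r\to\infty$.

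The key observation is that the upper bound $c_d a^d h_d(\theta)$ is constant in $z$ and is attained at $z_0$ in the limit, so any $L^1_{\mathrm{loc}}$ cluster value of $\{u_r^{(\theta)}\}_r$ is a PSH function on $\Omega$ that is bounded above by this constant and equals it at $z_0$; the PSH maximum principle on the connected set $\Omega$ then forces this cluster value to be identically $c_d a^d h_d(\theta)$. Hence $u_r^{(\theta)}\to c_d a^d h_d(\theta)$ in $L^1_{\mathrm{loc}}(\Omega)$, and a Hartogs-type refinement gives uniformity in $\theta$ away from the isolated directions at which $h_d$ loses smoothness.

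For the first claim, apply Jensen's formula to $\mcd(z,\cdot)$,
$$N_{V(z)}(r)=\frac{1}{2\pi}\int_0^{2\pi}\log|\mcd(z,re^{i\theta})|\,d\theta-\log|\mcd(z,0)|+O(1),$$
integrate against $\psi(z)$, divide by $r^d$, and pass to the limit using the preceding convergence together with $\int_0^{2\pi}h_d(\theta)\,d\theta=2\pi/d$; the term involving $\log|\mcd(z,0)|$ is bounded after $\psi$-integration because it is PSH and locally integrable in $z$. A Tauberian argument based on the monotonicity of $n_{V(z)}(r)$ and its locally uniform upper bound $n_{V(z)}(r)\le C(K)r^d+O(1)$ converts this to the stated asymptotic for $n_{V(z)}$. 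For the two sector claims, apply the Levin formula, which expresses the number of zeros of an entire order-$d$ function in a sector $\{\pi+\varphi<\arg\lambda<\pi+\theta,\,|\lambda|\le r\}$ as a boundary integral of $r^{-d}\log|\mcd|$ along the arc and the two radii, modulo $o(r^d)$; integrate each such contribution against $\psi(z)\,d\mathcal{L}(z)$, and invoke the $L^1_{\mathrm{loc}}$ and angular-uniform convergence to replace $r^{-d}\log|\mcd(z,re^{i\alpha})|$ by $c_d a^d h_d(\alpha)$ on each boundary piece. This reproduces precisely the combination $h_d'(\theta)-h_d'(\varphi)+d^2\int_\varphi^\theta h_d(s)\,ds$ of Proposition \ref{p:angles}. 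The third formula, involving the ray $\arg\lambda=\pi$, follows by an appropriate $\varphi\downarrow 0$ passage that absorbs the boundary contribution along the negative real axis, exactly as in the derivation of (\ref{eq:nearpi}) extended to any $V\in\mfm_a$ via Corollary \ref{cor:nearreal}, but carried out under the averaging against $\psi$.

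The principal obstacle will be the third step: justifying the interchange of $z$-integration with the Levin boundary integrals and the $r\to\infty$ limit, especially near the non-generic angles $\theta=0,\pi$ at which $h_d$ loses regularity. This requires upgrading the $L^1_{\mathrm{loc}}$ convergence of the $u_r^{(\theta)}$ to convergence uniform in $\theta$ (away from the singular directions), together with a locally uniform estimate $\log|\mcd(z,\lambda)|\le C(K)|\lambda|^d$ on compact $K\subset\Omega$, which is needed to dominate the boundary contributions and to handle the $\varphi\downarrow 0$ limit via Lemma \ref{l:hdbasics}.
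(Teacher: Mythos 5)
Your broad strategy---view the log-modulus of a scattering determinant as a plurisubharmonic function of $z$, bound it above via Stefanov's estimate, note that the bound is attained at $z_0$, propagate this by a PSH argument, and then integrate against $\psi$---is essentially the paper's strategy, which it implements through Proposition~\ref{p:l-g} and a ball-averaging argument in Propositions~\ref{p:avg1}--\ref{p:lessavg}. However, there are genuine gaps in the implementation.

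First, the constants: Theorem~\ref{t:stefanov} gives $\limsup r^{-d}\ln|s_V(re^{i\theta})|\le a^d h_d(\theta)$, not $c_d a^d h_d(\theta)$; the factor $c_d$ is already $\frac{d}{2\pi}\int_0^\pi h_d$, so inserting it again is an error. Relatedly, the identity $\int_0^{2\pi}h_d(\theta)\,d\theta = 2\pi/d$ is false, and indeed $h_d$ is defined only on $[0,\pi]$. This is not cosmetic: your plan uses Jensen's formula over the full circle for an \emph{entire} regularized determinant $\mcd(z,\cdot)$, so you need the angular indicator of $\mcd$ on all of $[0,2\pi]$. But the hypothesis $V(z_0)\in\mfm_a$, via Proposition~\ref{p:showcrg} and Theorem~\ref{t:stefanov}, only pins down the indicator in the half-plane containing the zeros. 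The behavior of $\log|\mcd|$ in the other half-plane (where the resolvent is bounded) is governed by entirely different estimates and is not forced to be $a^d h_d(\cdot)$ reflected. The paper sidesteps this by working with $s_V$ and the one-sided Froese-type identity~(\ref{eq:frid}), which involves $\ln|s_V|$ only over $[0,\pi]$, together with bounds on the real-axis argument growth from Lemma~\ref{l:derivest} and~(\ref{eq:simplebd}).

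Second, and more seriously, the sector claims. You write that Levin's formula expresses the sector zero count ``as a boundary integral of $r^{-d}\log|\mcd|$ along the arc and the two radii, modulo $o(r^d)$.'' That is not the structure of the argument-principle/Levin formula: the radial boundary pieces contribute integrals of $\frac{d}{dt}\arg\mcd(te^{i\alpha})$ and of $\frac{\partial}{\partial\theta}\ln|\mcd(te^{i\theta})|$---i.e., \emph{derivatives} of $\log|\mcd|$---not $\log|\mcd|$ itself (compare Lemma~\ref{l:newgfj}). The $L^1_{\rm loc}$ or even locally uniform convergence of $r^{-d}\log|\mcd(z,re^{i\theta})|$ to the indicator does not control these derivative terms, and for generic $z$ you have no completely-regular-growth information to invoke. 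The paper's device is to integrate once in $\theta'$ (Lemma~\ref{l:intangles}), which turns the $\theta$-derivative term into a difference $J^t_{s_V}(\theta)-J^t_{s_V}(0)$ and controls the radial argument term via Lemma~\ref{l:derivest} and~(\ref{eq:simplebd}); after the PSH argument one then removes the $\theta$-average using the monotonicity Lemma~\ref{l:noavg} (Proposition~\ref{p:avg2}) and removes the $r$-average (Proposition~\ref{p:lessavg}). Your plan omits both the $\theta$-integration and the un-averaging lemma, and the obstacle you flag (uniformity in $\theta$ near $0,\pi$) is not where the real difficulty lies.
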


\begin{cor}\label{cor:nearreal}
Let $V\in \mfm_a$.  For any $0<\theta<\pi$, 
\begin{equation}\label{eq:angle1}
n_V(r,\pi,\theta+\pi) = \frac{1}{2\pi d}\left[
h_d'(\theta)+ d^2\int_{0}^{\theta} h_d(s) ds\right] a^dr^d
+o(r^d)
\end{equation}
and, for any $0<\varphi<\pi$,
\begin{equation}\label{eq:angle2}
n_V(r,\varphi+\pi,2\pi)=\frac{1}{2\pi d}\left[-h_d'(\varphi)+ d^2\int_{\varphi}^{\pi} h_d(s) ds\right]
a^dr^d
+o(r^d)
\end{equation}
as $r\rightarrow \infty$.
\end{cor}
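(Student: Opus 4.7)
The plan is to derive Corollary \ref{cor:nearreal} directly from Theorem \ref{thm:second} by specializing to the constant family of potentials, combined with elementary bookkeeping that uses the total asymptotic $n_V(r)\sim c_d a^d r^d$ to cover the missing sector.

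For any $V\in\mfm_a$, the map $V(z,x)\equiv V(x)$ is trivially holomorphic in $z$ on any open connected $\Omega\subset\Complex^p$, and $V(z_0)=V\in\mfm_a$ for every $z_0\in\Omega$; hence the constant family satisfies the hypotheses of Theorem \ref{thm:first}, and Theorem \ref{thm:second} applies. Fix any such $\Omega$ and any nonnegative $\psi\in C_c(\Omega)$ with $I:=\int_\Omega\psi\,d\mathcal{L}>0$. The third asymptotic in Theorem \ref{thm:second}, applied to the constant family, reads
\begin{equation*}
I\cdot n_V(r,\pi,\pi+\theta)=\frac{a^d r^d}{2\pi d}\left[h_d'(\theta)+d^2\int_0^\theta h_d(s)\,ds\right] I+o(r^d).
\end{equation*}
Dividing by the positive constant $I$ preserves the $o(r^d)$ remainder and yields (\ref{eq:angle1}) at once.

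To obtain (\ref{eq:angle2}), I would combine (\ref{eq:angle1}) with two observations. First, in odd dimension the poles of $R_V(\lambda)$ in the upper half plane come only from the (finitely many) $L^2$-eigenvalues of $-\Delta+V$, so $n_V(r,\pi,2\pi)=n_V(r)+O(1)=c_d a^d r^d+o(r^d)$ by the definition of $\mfm_a$. Second, for any fixed $\varphi\in(0,\pi)$,
\begin{equation*}
n_V(r,\pi,2\pi)=n_V(r,\pi,\pi+\varphi)+n_V(r,\pi+\varphi,2\pi)-\#\{\lambda\text{ pole}:|\lambda|\le r,\ \arg\lambda=\pi+\varphi\}.
\end{equation*}
The ``ray'' term is $o(r^d)$: for any $\delta>0$ it is dominated by $n_V(r,\pi,\pi+\varphi+\delta)-n_V(r,\pi,\pi+\varphi-\delta)$, which by (\ref{eq:angle1}) is a constant multiple of $\delta r^d$ plus $o(r^d)$; sending $\delta\downarrow 0$ after taking $\limsup_r r^{-d}$ kills it. Solving for $n_V(r,\pi+\varphi,2\pi)$ and substituting (\ref{eq:angle1}) gives the right-hand side of (\ref{eq:angle2}) provided $2\pi c_d=d\int_0^\pi h_d(s)\,ds$; this compatibility identity follows directly from the definitions (\ref{eq:cd}) and (\ref{eq:hd}).

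The main ``obstacle'' is conceptual rather than technical: recognizing that the constant family $V(z)\equiv V$ is an admissible input for Theorem \ref{thm:second}, so that the corollary reduces to one division and one set-theoretic decomposition. The only bookkeeping step worth checking is the compatibility identity $2\pi c_d=d\int_0^\pi h_d(s)\,ds$, which ties the total-density constant $c_d$ to the integral of the angular indicator $h_d$ over a half-period.
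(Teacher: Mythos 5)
Your proof is correct and takes essentially the same route the paper gives (in a single sentence): specialize Theorem \ref{thm:second} to the constant family $V(z)\equiv V$ to read off (\ref{eq:angle1}), then obtain (\ref{eq:angle2}) by complementation against the total count $n_V(r)=c_d a^d r^d+o(r^d)$ and the identity $c_d=\tfrac{d}{2\pi}\int_0^\pi h_d(\theta)\,d\theta$ from (\ref{eq:cd}). You have simply filled in the bookkeeping the paper leaves implicit, namely that the finitely many poles in the closed upper half plane contribute $O(1)$ and that the boundary ray $\arg\lambda=\pi+\varphi$ carries $o(r^d)$ poles, which follows from (\ref{eq:angle1}) together with the $C^1$-regularity of $h_d$ established in Lemma \ref{l:hdbasics}.
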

This corollary follows from Theorem \ref{thm:second} by taking $V(z)$ equal
to the constant (in $z$) potential $V$.  We could instead give a more 
direct proof by, essentially, simplifying the proof of Proposition
\ref{p:avg1} and then applying Lemma \ref{l:noavg}.

It is worth noting that the coefficients of $r^d$ in (\ref{eq:angle1}) 
and (\ref{eq:angle2}) are positive, so that in any sector in the lower half
plane which touches the real axis, the number of resonances grows like $r^d$.

The proofs of the results here are possible because of the optimal 
upper bounds on 
$\lim\sup _{r\rightarrow \infty}r^{-d} \ln |\det S_V(re^{i\theta})|$,
$0<\theta<\pi$, proved in \cite{stefanov}, see Theorem \ref{t:stefanov} here.
These, combined with some one-dimensional complex analysis, are used to 
prove Proposition \ref{p:angles}, and could be used to give a direct proof 
of Corollary \ref{cor:nearreal}.  The proofs of the other theorems 
use, in addition
to one-dimensional complex analysis, some facts about plurisubharmonic 
functions.  Many of the complex-analytic results which we shall use are 
recalled in Section \ref{s:complex}.

Again, we emphasize that we are concerned here with large $r$ behavior
of resonance counting functions, and consequently of other functions
as well.  Thus, statements of the type $f(r)=g(r)+o(r^p)$
are to be understood as holding for large values of $r$.

{\bf Acknowledgments.} It is a pleasure
to thank Plamen Stefanov and Maciej Zworski
for helpful conversations during the writing of this paper, and the
referee for helpful comments which improved the exposition.
  The author 
gratefully acknowledges the partial support of the NSF under grant 
DMS 1001156.

\section{Some complex analysis}
\label{s:complex}

In this section we recall some definitions 
and results from complex analysis in one
and several variables.  We will mostly follow the 
notation and conventions of \cite{levin} and \cite{l-g}.  We also 
prove a result, Proposition \ref{p:showcrg}, for which we are unaware of
a proof in the literature.

%We recall some definitions from \cite{levin}.
The {\em upper relative measure} of a set $E \subset \Real_+$ 
is 
$$\lim\sup_{r\rightarrow \infty} \frac{\meas(E\cap(0,r))}{r}.$$
A set $E\subset \Real_+$ is said to have {\em zero relative measure}
if it has upper relative measure $0$. 
%$$\lim_{r\rightarrow \infty} \frac{\meas(E\cap(0,r))}{r} =0.$$

If $f$ is a function holomorphic in the sector $\varphi<\arg z <\theta$,
we shall say $f$ is of order $\rho$ if $\lim \sup _{r \rightarrow \infty}
\frac{\ln \ln (\max_{\varphi<\phi < \theta}|f(r e^{i\phi})|)}{\ln r}
=\rho $.  We shall further restrict ourselves to functions of order $\rho$
and 
{\em finite type}, so that 
$$\lim \sup _{r \rightarrow \infty}
\frac{\ln (\max_{\varphi<\phi < \theta}|f(r e^{i\phi})|)}{r^{\rho}}<\infty.$$
We shall use similar definitions for a function holomorphic in a neighborhood
of a closed sector.  In this section only, we shall, following 
Levin \cite{levin}, use the notation $h_f$ for the
{\em  indicator function} (or {\em indicator})
of a function $f$ of order $\rho$: 
$$h_{f}(\theta)\defeq \lim \sup 
_{r\rightarrow \infty} \left( r^{-\rho}\ln |f(r e^{i\theta})|\right).$$
Suppose $f$ is a function analytic in the angle $(\theta_1,\theta_2)$ 
and of order $\rho$  and finite type there.  
The function $f$ is of {\em completely 
regular growth} on some set of rays $R_\mathfrak{M}$ ($\mathfrak{M}$
is the set of values of $\theta$) if the function
$$h_{f,r}(\theta) \defeq \frac{\ln |f(r e^{i\theta})|}{r^{\rho}}$$
converges uniformly to $h_f(\theta)$ for $\theta\in \mathfrak{M}$ 
when $r$ goes to infinity taking on all positive
values except possibly for a set $E_{\mathfrak{M}}$ of zero relative measure.
The function $f$ is of {\em completely regular growth in the angle
$(\theta_1,\theta_2)$ } if it is of completely
regular growth on every closed interior angle.

%In this section only, we shall, following 
%Levin \cite{levin}, use the notation $h_f$ for the
%{\em  indicator}
%of a function $f$ of order $\rho$: $h_{f}(\theta)=\lim \sup 
%_{r\rightarrow \infty} \left( r^{-\rho}\ln |f(r e^{i\theta})\right)$.

Functions of completely regular growth have zeros that are rather 
regularly distributed. 
For a function $f$ holomorphic in $\{z: \theta_1<\arg z < \theta_2\}$
we define, for $\theta_1<\varphi<\theta<\theta_2$,
$m_f(r,\varphi,\theta)$ to be the number of zeros of $f(z)$ in the 
sector $\varphi\leq \arg z \leq \theta$, $|z|\leq r$.
\footnote{More standard notation would be $n(r,\varphi,\theta)$, but
we have already defined $n_V(r,\varphi,\theta)$ to be something else.} 
 We recall the following theorem from \cite{levin}.
\begin{thm}\cite[Chapter III, Theorem 3]{levin} \label{thm:crg}
If a holomorphic function $f(z)$ of order $d$ and finite type
has completely regular 
growth within an angle $(\theta_1, \theta_2)$, then for all values of 
$\varphi$ and $\theta$, ($\theta_1< \varphi <\theta <\theta_2$ )
except possibly for a denumerable set, the following limit will exist:
$$\lim_{r \rightarrow \infty} \frac{m_f(r,\varphi, \theta)}{r^d} 
= \frac{1 }{2 \pi d}\tilde{s}_f(\varphi, \theta)$$
where 
$$ \tilde{s}_f(\varphi,\theta)= \left[ h_f'(\theta)-h_f'(\varphi)
+ d^2 \int_\varphi^\theta h_f(s)ds\right].$$
The exceptional denumerable set can only consist of points for which 
$h_f'(\theta + 0)\not = h_f'(\theta - 0)$.
\end{thm}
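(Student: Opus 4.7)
I would follow the classical route to distribution theorems for functions of completely regular growth: a sectorial Carleman-type identity combined with the uniform asymptotics guaranteed by CRG.

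\medskip

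\emph{Step 1: A sectorial Carleman--Levin identity.} For $f$ holomorphic of order $d$ in a neighborhood of the closed sector $S_R = \{ z : \varphi \leq \arg z \leq \theta,\ |z|\leq R\}$, there is an identity expressing $m_f(R,\varphi,\theta)$ in terms of three boundary contributions: an integral of $\log|f(Re^{i\phi})|$ along the outer arc, plus two ray integrals of $\log|f(te^{i\varphi})|$ and $\log|f(te^{i\theta})|$ against kernels adapted to the aperture $\theta-\varphi$ and the order $d$. After normalizing by $R^d$, the kernels are chosen so that in the limit the arc integral produces the term $d^2\int_\varphi^\theta h_f(s)\,ds$ (a factor of $d$ comes from the kernel, a factor of $d$ from the normalization) while the two ray integrals produce, respectively, $-h_f'(\varphi)$ and $h_f'(\theta)$, all divided by $2\pi d$.

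\medskip

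\emph{Step 2: Passage to the limit using CRG.} By definition of completely regular growth within $(\theta_1,\theta_2)$, $r^{-d}\log|f(re^{i\phi})| \to h_f(\phi)$ uniformly on every compact sub-angle as $r\to\infty$ outside a set of zero relative measure. Choosing $R\to\infty$ through the complement of this exceptional set, one can pass to the limit inside the outer-arc integral and obtain the integral term above. For the ray integrals one integrates by parts against the Carleman kernel, so that only $\log|f|$ (not its derivative) appears in the integrand; the CRG asymptotic then converts the ray integrand to $h_f(\alpha)$ almost everywhere along the ray, while the kernel's boundary behavior at $t=R$ produces $h_f'(\alpha)$ after normalization by $R^d$.

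\medskip

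\emph{Step 3: The exceptional denumerable set.} The indicator $h_f$ is trigonometrically $d$-convex on $(\theta_1,\theta_2)$; hence one-sided derivatives $h_f'(\theta\pm 0)$ exist at every interior angle and can disagree only on an at most countable set. Off this countable set the limits produced in Step 2 are unambiguous, giving the stated formula. At angles $\theta$ where $h_f'$ jumps, zeros of $f$ may accumulate near the ray $\arg z=\theta$ and $m_f(r,\varphi,\theta)$ can differ from its one-sided versions, which accounts exactly for the exceptional denumerable set described in the theorem.

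\medskip

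The main obstacle is the ray integral in Step 2: CRG controls $|f|$ but not its derivative, yet the conclusion involves $h_f'$. The standard resolution is to integrate by parts against the sectorial Carleman kernel so that the derivative of the indicator emerges from the kernel rather than from $f$ itself; verifying that the remaining integrand still enjoys the uniform CRG asymptotic, and that the exceptional set of radii accumulates in a negligible way after the integration by parts, is the delicate point.
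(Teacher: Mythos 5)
This theorem is quoted verbatim from Levin \cite[Chapter III, Theorem 3]{levin} and the paper supplies no proof of it; it is used as an external input, so there is no in-paper argument to compare your sketch against. Your outline does track the route taken in Levin's monograph: a sectorial Jensen/Carleman-type identity, the uniform convergence of $r^{-d}\ln|f(re^{i\phi})|$ to $h_f(\phi)$ on compact sub-angles outside a set of radii of zero relative measure, and trigonometric $d$-convexity of the indicator to control where one-sided derivatives can disagree, which yields the denumerable exceptional set.

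Two points in Steps 1--2 are asserted rather than argued and are precisely where the work lies. First, the Carleman-type identity for a sector most naturally produces a \emph{weighted} sum over zeros (the weights depend on $|z_k|$ and $\arg z_k$ through the sectorial kernel), not the raw count $m_f(R,\varphi,\theta)$; Levin's actual proof first establishes the existence of the angular density (Chapter III, Theorem 2), via a Tauberian/monotonicity step relating weighted and unweighted counts, and only then identifies that density with $\tilde{s}_f/(2\pi d)$. Second, the claim that $h_f'(\varphi)$ and $h_f'(\theta)$ ``emerge from the kernel's boundary behavior after integration by parts'' is the crux, not a formality: c.r.g. controls $|f|$ but not its logarithmic derivative, and Levin extracts the derivative terms through the $\theta$-derivative of the generalized Jensen integral $J_f^r(\theta)$ (compare the paper's Lemma \ref{l:newgfj}) together with properties of the one-sided derivatives of the indicator. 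You correctly flag this as the delicate point, which shows good judgment, but the sketch does not actually supply the mechanism, so as written it is an outline of the right strategy rather than a proof.
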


In the following proposition we use the notation $m_f(r)$ to 
denote the number of zeros of a function $f$, 
counted with multiplicity, with norm at most $r$.  It is likely
that some of the hypotheses included here could be relaxed.  However,
when we apply this proposition, $f$ will be the determinant of the 
scattering matrix, perhaps multiplied by a rational function, and 
many of these hypotheses are natural in such applications.

Let $f(z)$ be a function meromorphic on $\Complex$.  Then 
$f(z)=g_1(z)/g_2(z)$, with $g_1,$ $g_2$ entire.  The functions 
$g_1$ and $g_2$ are not uniquely determined.  
However, the order of $f$ can be defined to be
$$\min\{ \max(\text{order of $g_1$, order of $g_2$): 
$f(z)=g_1(z)/g_2(z)$ with $g_1$, $g_2$ entire}\}.$$
It is possible to define the order of a meromorphic function by 
using the Nevanlinna characteristic function, yielding the same result.
\begin{prop}\label{p:showcrg}
Let $f$ be a function meromorphic in the complex plane, with neither zeros
nor poles on the real line.
%, and which is bounded on the real line.  
Suppose all the zeros of $f$ lie in 
the open upper half plane, and all the poles in the open lower half plane.  
Furthermore, assume $f$ is of order $d>1$, 
$h_f$ is finite for $0\leq \theta\leq \pi$, and $h_f(\theta_0)\not =0$
for some $\theta_0$,  $0<\theta_0<\pi$.  Suppose in addition
\begin{equation}\label{eq:argbd}
\int _0^r \frac{f'(t)}{f(t)}dt =o( r^d) \; \text{as $r\rightarrow \pm \infty$},
\end{equation}
and the number of poles of $f$ with norm at most $r$ is of order at most
$d$.
If 
 $$\lim \inf _{r\rightarrow 
\infty} \frac {m_f(r)} {r^d} = \frac{d}{2\pi} \int_0^{\pi} h_f(\theta)
d\theta,$$
then $f$ is of completely regular growth in the angle $(0,\pi)$.
\end{prop}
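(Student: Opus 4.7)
My plan is to use Carleman's formula in the upper half-plane to obtain an asymptotic upper bound on the zero-counting function, combine it with the hypothesized $\liminf$ to get an exact density, and then invoke a converse characterization of complete regular growth. Since all zeros of $f$ lie in the open upper half-plane and $h_f(\theta_0)\neq 0$ rules out degeneracy, the target density is $\Delta := \frac{d}{2\pi}\int_0^\pi h_f(\theta)\, d\theta$, which is precisely the value supplied by the hypothesis.

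Concretely, apply Carleman's formula on the upper semi-disk $D_R^+ = \{|z|\leq R,\ \Im z\geq 0\}$: for $f$ meromorphic and nonvanishing on $\partial D_R^+$, this identity relates the weighted zero sum
$$\sum_{\lambda_k\in D_R^+}\!\left(\frac{1}{|\lambda_k|}-\frac{|\lambda_k|}{R^2}\right)\!\sin\theta_k$$
(no pole contribution appears, since $f$ has no poles in the upper half-plane) to a semicircular integral $\frac{1}{\pi R}\int_0^\pi \log|f(Re^{i\theta})|\sin\theta\, d\theta$ and a real-axis boundary integral with weight $\frac{1}{t^2}-\frac{1}{R^2}$. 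The finite-type hypothesis and Fatou's lemma bound the semicircular term above by $(1+o(1))\frac{R^{d-1}}{\pi}\int_0^\pi h_f(\theta)\sin\theta\, d\theta$. For the real-axis term, write $\log|f(t)| = \log|f(0)| + \Re\int_0^t f'/f\, ds$ and integrate by parts in $t$; the hypothesis $\int_0^r f'/f\, dt = o(r^d)$ then absorbs the resulting contributions. Abel summation converts the weighted zero sum into an integral of $m_f(r)$ against an explicit kernel, yielding $\limsup_{r\to\infty} m_f(r)/r^d \leq \Delta$. Together with the $\liminf$ hypothesis, $\lim m_f(r)/r^d = \Delta$. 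Finally, invoke the classical converse (Levin, Chapter III, Theorem 4, or Pfluger's theorem): a function of order $d$ and finite type on an angle whose zeros have the density predicted by its indicator is of completely regular growth there. Meromorphy is handled by writing $f = g_1/g_2$ with $g_1,g_2$ entire of order $\leq d$, which is possible by the pole-count hypothesis.

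The main obstacle I anticipate is the real-axis boundary estimate: only endpoint values of $\log|f|$ are directly controlled by $\int_0^r f'/f\, dt = o(r^d)$, while Carleman's weight $1/t^2$ is singular at the origin. Careful integration by parts together with separate handling of the behavior near $t=0$ (where $f(0)\neq 0$ ensures $\log|f|$ is bounded) should resolve this, but the bookkeeping is delicate. A secondary difficulty is adapting the standard CRG converse, usually stated for entire functions in the whole plane, to the meromorphic half-plane setting; the fact that $f$ has no zeros in the lower half-plane and only a controlled set of poles there makes the adaptation plausible.
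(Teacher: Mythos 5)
Your proposal has two genuine gaps, both at load-bearing points. First, Carleman's formula is the wrong tool here. The $\sin\theta_k$ weight attached to each zero $\lambda_k=|\lambda_k|e^{i\theta_k}$ means the formula controls the \emph{weighted} count $\sum_{|\lambda_k|\leq r}\sin\theta_k$, not the unweighted count $m_f(r)$; zeros accumulating near the real axis are invisible to the weight, so no Abel-summation manoeuvre converts the Carleman sum into a bound on $m_f$. (This is precisely why the paper instead uses the half-plane argument-principle identity (\ref{eq:frid}), a Jensen-type formula \emph{without} a $\sin\theta$ weight, which directly relates $\int_0^r m_f(t)t^{-1}dt$ to $\int_0^\pi\ln|f(re^{i\theta})|d\theta$ plus a real-axis term absorbed by (\ref{eq:argbd}).) Second, the ``classical converse'' you invoke does not exist in the form you state. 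Pfluger's criterion and the Levin results on R-distributed zeros require \emph{angular} densities $\Delta(\varphi,\theta)$ in all subsectors, not merely the total density $\lim m_f(r)/r^d$, and Levin, Chapter III, Theorem 4 is not a direct converse of Theorem 3. The result the paper says it is mimicking, Levin Chapter IV, Theorem 3 for entire functions in the whole plane, is itself proved by exactly the mechanism you are trying to skip: one uses the Jensen identity and the indicator upper bound (\ref{eq:asympteq}) to force $\int_0^\pi\bigl|h_f(\theta)-r^{-d}\ln|f(re^{i\theta})|\bigr|d\theta\to 0$, and then upgrades this $L^1$ convergence to uniform convergence on closed subangles (off a set of small upper relative measure) via the equicontinuity theorem, Levin Chapter II, Theorem 7, which applies after writing $f=g_1/g_2$ with $g_1,g_2$ entire of order at most $d$. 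That $L^1$-plus-equicontinuity step is the real content of the proof of complete regular growth, and your outline omits it entirely.

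So, concretely: replace Carleman by the unweighted half-plane identity (\ref{eq:frid}); combine it with Levin's inequality $\liminf m_f(r)/r^d\leq\liminf\, dr^{-d}\int_0^r m_f(t)t^{-1}dt$ and the indicator bound to deduce $\lim r^{-d}\int_0^r m_f(t)t^{-1}dt=\frac{1}{2\pi}\int_0^\pi h_f$; feed this back through (\ref{eq:frid}) and (\ref{eq:argbd}) to obtain the $L^1$ convergence of $r^{-d}\ln|f(re^{i\theta})|$ to $h_f(\theta)$; and only then invoke equicontinuity to get uniform convergence on closed subangles, which is the definition of completely regular growth. Without that last block of argument you have at most a statement about zero density, not about $\ln|f|$ itself.
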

Before proving the proposition, we note that Govorov \cite{g1,g2} 
has studied the issue of completely regular growth of functions holomorphic
in an angle. This is discussed in \cite[Appendix VIII, section 2]{levin}.
This is somewhat different than what we consider, since we use the 
assumption that $f$ is meromorphic and of order $d$ on the plane.  
Thus Govorov uses different restrictions on the distribution of 
the zeros of $f$.
\begin{proof}
The proof of this proposition follows in outline the proof of the analogous
theorem for entire functions in the plane, \cite[Chapter IV, Theorem 3]{levin}.
Rather than using Jensen's theorem, though, it uses the equality
\begin{equation}\label{eq:frid}
\int_0^r \frac{m_f(t)}{t}dt = \frac{1}{2\pi} \Im \int_{0}^r \frac{1}{t}
\int_{-t}^t
\frac{f'(s)}{f(s)} ds dt + 
\frac{1}{2\pi} \int_0^{\pi} \ln |f(r e^{i\theta})|d\theta 
\end{equation}
if $|f(0)|=1$,
which follows using the proof of \cite[Lemma 6.1]{froese}.

By \cite[Property (4), Chapter I, section 12]{levin}, 
\begin{equation}\label{eq:lowerbdlevin}
\lim\inf_{r\rightarrow \infty}\frac{ m_f(r)}{r^d}
\leq \lim\inf _{r\rightarrow \infty}d r^{-d} \int_0^r \frac{m_f (t)}{t} dt.
\end{equation}
We note \cite[Chapter I, Theorem 28]{levin}
that for any $\epsilon >0$ there is an $R>0$ so that
\begin{equation}
\label{eq:asympteq}
r^{-d}\ln|f(r e^{i\theta})| \leq h_f(\theta)+\epsilon,\; 
\text{for $r>R$, $0\leq \theta \leq \pi$}.
\end{equation} 
Using this,  (\ref{eq:frid}), and our assumptions on the behavior of $f$ on
the real axis,
we see that 
$$\lim\sup_{r \rightarrow \infty} r^{-d}
\int_0^r \frac{m_f(t)}{t}dt \leq \frac{1}{2\pi}
\int_0^{\pi} h_f(\theta)d\theta.$$

Combining this with (\ref{eq:lowerbdlevin}) and using our assumptions on 
$m_f(r)$, we get 
$$\lim _{r\rightarrow \infty} r^{-d}\int_0^r\frac{m_f(t)}{t}dt = \frac{1}{2\pi}
\int_0^\pi h_f(\theta)d\theta.$$
%$$\lim \inf_{r \rightarrow \infty} \frac{n_f(r)}{r^d}
%\leq \lim \inf _{r\rightarrow \infty}d r^{-d} \int_0^r \frac{n_f (t)}{t} dt
%\leq \frac{d}{2\pi} \int_0^{\pi} h_f(\theta)d\theta.$$
%Note that 
%$$\lim \sup_{r\rightarrow \infty}d r^{-d} \int_0^r \frac{n_f (t)}{t} dt
%\leq \frac{d}{2\pi} \int_0^{\pi} h_f(\theta)d\theta.$$ 
%By our assumption on $n_f(r)$, we must have 
%$$\lim \sup_{r\rightarrow \infty}d r^{-d} \int_0^r \frac{n_f (t)}{t} dt
%= \frac{d}{2\pi} \int_0^{\pi} h_f(\theta)d\theta.$$
Thus using (\ref{eq:frid}) and (\ref{eq:argbd}) again, we have 
$$\lim _{r \rightarrow \infty} \int_0^\pi [h_f(\theta) 
- r^{-d}\ln|f(r e^{i\theta})|]d\theta =0,$$
and, using (\ref{eq:asympteq}), 
$$\lim _{r \rightarrow \infty} \int_0^\pi \left |h_f(\theta) 
- r^{-d}\ln|f(r e^{i\theta})|\right|d\theta =0.$$

%We next show that our assumptions on $f$ imply that it can be represented as
%the quotient of two entire functions, each of order at most $d$.   
%Let $\{\lambda_j\}$ be the poles of $f$, repeated according to multiplicity,
%and note that $\lambda_j\not =0$.
%Then set $g$ to be the canonical product
%$$g(\lambda)=\prod (1-\lambda/\lambda_j)\exp\left( \frac{\lambda}{\lambda_j}
%+\left( \frac{\lambda}{\lambda_j}\right)^2+\cdot \cdot \cdot +
%\left( \frac{\lambda}{\lambda_j}\right)^d\right).$$
%This product converges 
%to an entire function of order $d$ due to our assumptions on the order of 
%growth of the pole counting function of $f$.  Now $gf$ can be identified
%with an entire function.  Since $g$ and $f$ are both of order $d$ in the 
%upper half plane, their product is of order no greater than $d$ in the 
%upper half plane.  Moreover, if $\{\mu_j\}$ are the zeros of $f$, 
%repeated according to multiplicity, then 
%$$gf = \exp{\v arphi(z)}\prod (1-\lambda/\mu_j)\exp\left( \frac{\lambda}{\mu_j}
%+\left( \frac{\lambda}{\mu_j}\right)^2+\cdot \cdot \cdot +
%\left( \frac{\lambda}{\mu_j}\right)^d\right)
%$$ where
%$\varphi(z)$ is entire.
%*********need to come back to this!!***********
%and 
%$$\lim \sup_{r\rightarrow\infty}\frac{\ln |h(re^{i\theta})|}
%{\ln r} \leq d$$ 
%for $0<\theta<\pi$, using that $gf$ 

Since we have assumed $f$ is of order $d$, we may write
$f$ as the quotient of two entire functions, each of order at most $d$.  Then
we may apply \cite[Chapter 2, Theorem 7]{levin} to find that for 
every $\eta>0$ there is a set $E_\eta$ of 
positive numbers of upper relative measure less than $\eta$
so that if $r\not \in E_\eta$, the family of functions of $\theta$,
$$h_{f,r}(\theta)\defeq r^{-d} \ln |f(r e^{i\theta})|,$$
is equicontinuous in the angle $0<\epsilon_0 \leq \theta \leq \pi-\epsilon_0.$

Given $\eta>0$ and $\epsilon>0$ we can,
by the above result, find a $\delta >0$ with $(\theta_1-\delta,\theta_2+\delta)
\subset (0,\pi)$ and
a set $E_\eta$ of upper relative measure at 
most $\eta$ so that if
$\theta\in (\theta_1,\theta_2)$,
 $r\not \in E_\eta$, and $|\varphi-\theta|<\delta$, then 
$|h_{f,r}(\theta)-h_{f,r}(\varphi)|<\epsilon/4$
and $|h_f(\theta)-h_f(\varphi)|
<\epsilon/4$.  Then for $0<|k|<\delta$, $r\not \in E_\eta$,
\begin{align*}
|h_{f,r}(\theta)-h_{f}(\theta)|& < \epsilon/2 +\frac{1}{k} 
\int_\theta^{\theta+k} |h_{f,r}(\varphi)-h_f(\varphi)|d\varphi\\
& \leq \epsilon/2+ \frac{1}{k}\int_0^\pi |h_{f,r}(\varphi)-h_f(\varphi)|d\varphi.
\end{align*}
Since the 
integral goes to $0$ as $r\rightarrow \infty$, we have shown
that for $r > r_{\epsilon}$, $r \not \in E_{\eta}$, 
$|h_{f,r}(\theta)-h_{f}(\theta)|<\epsilon$.  Since $\eta>0 $ and $\epsilon>0$
are arbitrary, we have, by \cite[Chapter III, Lemma 1]{levin}, $f$ is of completely
regular growth in $(\theta_1,\theta_2)$.
\end{proof}

We shall also need some basics about plurisubharmonic functions and 
pluripolar sets.  We use notation as in \cite{l-g} and refer the 
reader to this reference for more details.

Let $\Omega \subset \Complex^p$ be an open connected set.  
A function $\Psi:\Omega \rightarrow [-\infty, \infty)$ is said to be 
{\em plurisubharmonic } if  $\Psi \not \equiv -\infty$,
$\Psi$ is upper semi-continuous, and 
$$\Psi(z)\leq \frac{1}{2\pi} \int_0^{2\pi} \Psi(z+w r e^{i\theta})d\theta$$
for all $w,\; r$ such that $z+uw\subset \Omega$ for all $u\in \Complex, $ 
$|u|\leq r$.   The classic example of a plurisubharmonic function is
$\ln |f(z)|$, where $f(z)$ is holomorphic.  
A subset $E\subset \Omega\subset \Complex^p$ is said to be {\em pluripolar}
if there is a function $\Psi$ plurisubharmonic on $\Omega$ so that 
$E\subset \{ z: \psi(z)= -\infty\}.$

For the convenience of the reader, we recall \cite[Proposition 1.39]{l-g},
which is the main additional fact from several complex variables which we 
shall need.
\begin{prop}\label{p:l-g} (\cite[Prop. 1.39]{l-g}) Let $\{\Psi_q\}$
be a sequence of plurisubharmonic functions uniformly bounded above on
compact subsets in an open connected set $\Omega \subset \Complex^p$,
with $\lim \sup _{q\rightarrow \infty} \Psi_q \leq 0$
and suppose that there exist $\xi \in \Omega$ such that 
$\lim \sup _{q\rightarrow \infty}\Psi_q(\xi)=0$.
Then $A=\{ z\in \Omega: \lim \sup_{q\rightarrow \infty}\Psi_q(z)<0\}$ 
is pluripolar in $\Omega$.
\end{prop}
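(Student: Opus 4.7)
The plan is to combine the maximum principle for plurisubharmonic functions with the classical negligibility theorem for upper envelopes of plurisubharmonic functions.

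First, I would set $u(z) := \limsup_{q\to\infty} \Psi_q(z)$ and let $u^{*}$ denote its upper semicontinuous regularization, $u^{*}(z) := \limsup_{w\to z} u(w)$. Because the family $\{\Psi_q\}$ is uniformly bounded above on compact subsets of $\Omega$, a standard Hartogs-type argument (compare the treatment in \cite{l-g}, Chapter~1) shows that $u^{*}$ is plurisubharmonic on $\Omega$, and in particular not identically $-\infty$. This is the step that lets us replace the poorly-behaved $\limsup$ by a genuine plurisubharmonic function, on which the tools of the subject can act.

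Next, I would pin down $u^{*}$ using the hypothesis and the connectedness of $\Omega$. From $\limsup_q \Psi_q \leq 0$ on $\Omega$ we get $u \leq 0$, hence $u^{*} \leq 0$ as well, while on the other hand $u^{*}(\xi) \geq u(\xi) = 0$, so $u^{*}(\xi) = 0$. Thus $u^{*}$ is a plurisubharmonic function on the connected open set $\Omega$ attaining its supremum at the interior point $\xi$, and the maximum principle forces $u^{*} \equiv 0$.

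Now observe the inclusion $A \subseteq \{z \in \Omega : u(z) < u^{*}(z)\}$, since on $A$ we have $u(z) < 0 = u^{*}(z)$. The proof is completed by invoking the classical negligibility theorem of Lelong (subsequently sharpened by Bedford--Taylor): for any locally-bounded-above family of plurisubharmonic functions, the set on which the upper envelope $\limsup_q \Psi_q$ differs from its upper semicontinuous regularization is pluripolar. Combined with the inclusion above, this gives that $A$ is pluripolar.

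The first two steps are bookkeeping plus the plurisubharmonic maximum principle. The main obstacle is the third step, the negligibility theorem itself; this is the nontrivial pluripotential-theoretic input and is precisely what distinguishes a ``pluripolar'' conclusion from a much weaker ``measure zero'' conclusion.
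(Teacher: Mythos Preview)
Your argument is correct and is the standard pluripotential-theoretic proof of this fact. Note, however, that the paper does \emph{not} give its own proof of Proposition~\ref{p:l-g}: the proposition is merely quoted from \cite[Proposition~1.39]{l-g} for the reader's convenience and then used as a black box throughout Sections~5 and~\ref{s:proofthmfirst}. So there is nothing in the paper to compare your argument against; you have supplied the proof behind the cited result.

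One minor comment on presentation: calling the plurisubharmonicity of $u^{*}$ a ``Hartogs-type argument'' is a bit loose. The clean route is to write $u=\inf_{N}\sup_{q\geq N}\Psi_{q}$, note that each $(\sup_{q\geq N}\Psi_{q})^{*}$ is plurisubharmonic (this is the Brelot--Cartan/Lelong result for upper envelopes), and then use that a decreasing limit of plurisubharmonic functions is plurisubharmonic or identically $-\infty$. Your identification of the negligibility theorem (that $\{u<u^{*}\}$ is pluripolar) as the real nontrivial input is exactly right; this is precisely the content that Lelong--Gruman are packaging in the cited proposition.
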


\section{The functions $s_V(\lambda)=\det S_V(\lambda)$ and 
$h_d(\theta)$}

For $V\in L^{\infty}_{\comp}(\Real^d)$ and $\chi\in L^{\infty}_{\comp}(\Real^d)
$ with $\chi V=V$, we have
$\chi R_V(\lambda)\chi= \chi R_0(\lambda)\chi(I+VR_0(\lambda)\chi)^{-1}$.
Since for any $\chi$ with compact support in $\Real^d$, $\| \chi
R_0(\lambda)\chi\|\leq  c_{\chi}/|\lambda|$ when $\Im \lambda \geq 0$, 
we see that $R_V(\lambda)$ can have only finitely many poles in the 
closed upper half plane.  

For $V\in L^{\infty}_{\comp}(\Real^d)$, let $S_V(\lambda)$ be the associated
scattering matrix and $s_V(\lambda)=\det S_V(\lambda)$.  With at most
finitely many exceptions, the poles of $s_V(\lambda)$ coincide with 
the poles of $R_V(\lambda)$, and the multiplicities agree.  Moreover,
$s_V(\lambda)s_V(-\lambda)=1$.  
We recall \cite[Lemma 3.1]{polar}:
\begin{lemma}\label{l:derivest}
Let $V\in L^{\infty}_{\comp}(\Real^d;\Complex)$.
 For $\lambda \in \Real$, there is a $C_{V}$
so that
$$\left| \frac{d}{d\lambda}\ln s_{V}(\lambda)\right|
\leq C_{V}|\lambda|^{d-2}$$
whenever $|\lambda|$ is sufficiently large.
\end{lemma}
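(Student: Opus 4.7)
The plan is to realize $\frac{d}{d\lambda}\ln s_V(\lambda)$ as the trace of an explicit operator on $L^2(\Sphere^{d-1})$ and estimate it directly. Concretely, I would apply the Jacobi formula for Fredholm determinants,
$$\frac{d}{d\lambda}\ln s_V(\lambda) \;=\; \tr\bigl(S_V(\lambda)^{-1}S_V'(\lambda)\bigr),$$
valid for real $\lambda$ at which $S_V(\lambda)$ is invertible and $S_V'(\lambda)$ is of trace class. For large real $|\lambda|$, $S_V(\lambda)$ is invertible on $L^2(\Sphere^{d-1})$ with $\|S_V(\lambda)^{-1}\|_{\operatorname{op}}$ uniformly bounded; this follows from the high-energy bound $\|\chi R_0(\lambda)\chi\|_{\operatorname{op}}=O(1/|\lambda|)$ (used already in the paragraph preceding the lemma) applied to the standard factorization $S_V-I = (\text{explicit operator involving } VR_0(\lambda))$. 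Thus the task reduces to proving $\|S_V'(\lambda)\|_{1}\leq C_V|\lambda|^{d-2}$ in trace norm.

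For this I would use the explicit integral-operator realization of $S_V-I$ on $L^2(\Sphere^{d-1})$: its kernel has the form $c_d\lambda^{d-2}\int_{\Real^d}e^{-i\lambda\omega\cdot x}V(x)\psi_+(x,\lambda\omega')\,dx$, where $\psi_+$ is the outgoing generalized eigenfunction of $-\Delta+V$. Differentiating once in $\lambda$ produces a lower-order $O(|\lambda|^{d-3})$ piece from the prefactor, and an $O(|\lambda|^{d-2})$ piece from the integral: the factor $|x|$ inherited from $\partial_\lambda e^{-i\lambda\omega\cdot x}$ is uniformly bounded on $\supp V$, and standard high-energy bounds give $\partial_\lambda \psi_+=O(1)$ on $\supp V$. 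Passing from a smooth $O(|\lambda|^{d-2})$ kernel on the compact manifold $\Sphere^{d-1}\times\Sphere^{d-1}$ to an $O(|\lambda|^{d-2})$ trace-norm bound is then a standard Schatten-class estimate (smoothness in enough angular derivatives suffices on a compact manifold), and multiplying with the uniform bound on $\|S_V(\lambda)^{-1}\|_{\operatorname{op}}$ gives the lemma.

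The main obstacle is tracking the $\lambda$-dependence of angular derivatives of the scattering amplitude carefully enough to apply the Schatten bound. Each derivative in $\omega$ or $\omega'$ costs a factor of $|\lambda|$ via the exponential, which would ruin the estimate if many derivatives were needed. Two standard fixes are available: one may split the operator into a leading Born-approximation piece, handled by direct computation of the kernel $c_d\lambda^{d-2}\int e^{i\lambda(\omega'-\omega)\cdot x}V(x)\,dx$ and its $\lambda$-derivative (which are smooth and uniformly $O(|\lambda|^{d-2})$ on $\Sphere^{d-1}\times\Sphere^{d-1}$ by an explicit Fourier-integral bound), and a higher-order remainder controlled by iterated use of $\|\chi R_0(\lambda)\chi\|_{\operatorname{op}}=O(1/|\lambda|)$; alternatively, one imports the high-energy smoothness of the scattering amplitude from the semiclassical propagation literature. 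Either route converts the pointwise bound on the kernel into the required $C_V|\lambda|^{d-2}$ trace-norm bound, at which point the lemma is immediate.
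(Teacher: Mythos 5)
The paper does not prove this lemma; it states it as a recollection and cites \cite[Lemma~3.1]{polar} for the proof, adding only a remark that the bound, together with (\ref{eq:simplebd}), is ``relatively easy to see from an explicit representation of the scattering matrix.'' So there is no in-paper argument to compare against; I will assess your proposal on its own terms, with \cite{polar} as the reference.

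Your overall scaffold is the standard one and is sound: for $|\lambda|$ large real, $S_V(\lambda)$ is invertible with $\|S_V(\lambda)^{-1}\|_{\rm op}=O(1)$ via (\ref{eq:simplebd}), and then $\frac{d}{d\lambda}\ln s_V(\lambda)=\tr\bigl(S_V(\lambda)^{-1}S_V'(\lambda)\bigr)$ reduces the problem to a trace-norm bound $\|S_V'(\lambda)\|_1\le C_V|\lambda|^{d-2}$. The gap is in the mechanism you offer for that trace-norm bound. You invoke ``smoothness in enough angular derivatives'' on $\Sphere^{d-1}\times\Sphere^{d-1}$, then correctly observe that each angular derivative of the scattering kernel costs a factor of $|\lambda|$ (from $e^{-i\lambda\omega\cdot x}$), and then propose to repair this by splitting off the Born term and bounding it by ``an explicit Fourier-integral bound'' showing the kernel is pointwise $O(|\lambda|^{d-2})$. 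That repair does not close the gap. A pointwise $O(|\lambda|^{d-2})$ bound on a kernel over the compact set $\Sphere^{d-1}\times\Sphere^{d-1}$ controls only the Hilbert--Schmidt norm $\|T\|_2$, which is weaker than, not stronger than, the trace norm. And the Born kernel $\lambda^{d-2}\hat V(\lambda(\omega-\omega'))$ has the same problem you identified for the full amplitude: it oscillates at scale $\lambda$, so any smoothness-based Schatten estimate (Sobolev/H\"older $\Rightarrow\ $ trace class on a compact manifold) degrades by positive powers of $\lambda$ for each derivative used, producing a final exponent strictly larger than $d-2$. In short, your proposal does not actually exhibit a route from the pointwise kernel bound to the trace-norm bound.

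The estimate that actually works, and is what \cite{polar} relies on, is not a smoothness argument but a factorization through a Hilbert--Schmidt operator whose HS norm is computed directly in $L^2$, with no angular derivatives. Writing $S_V(\lambda)-I=c_d\lambda^{d-2}E_\lambda\,V\,(I+R_0(\lambda)V)^{-1}E_\lambda^{\,t}$, where $(E_\lambda f)(\omega)=\int_{\Real^d}e^{-i\lambda\omega\cdot x}f(x)\,dx$, one has for any $\chi\in L_{\comp}^\infty$ the exact identity $\|E_\lambda\chi\|_{\rm HS}^2=\int_{\Sphere^{d-1}}\!\int_{\Real^d}|\chi(x)|^2\,dx\,d\omega=|\Sphere^{d-1}|\,\|\chi\|_{L^2}^2$, uniformly in $\lambda$. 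Then $\|AB\|_1\le\|A\|_{\rm HS}\|B\|_{\rm HS}$ gives $\|S_V(\lambda)-I\|_1\le C|\lambda|^{d-2}$, and differentiating each factor (which produces a bounded $|x|$ from $\partial_\lambda E_\lambda$ on $\supp V$, an $O(\lambda^{d-3})$ from the prefactor, and $\chi R_0'(\lambda)\chi=O(1)$ from the resolvent) gives $\|S_V'(\lambda)\|_1\le C_V|\lambda|^{d-2}$. Your proposal has the correct skeleton and correctly diagnoses the danger in using angular derivatives, but it does not supply the Hilbert--Schmidt factorization that is the actual content of the bound, and the alternatives it offers in place of that factorization would not yield the stated exponent.
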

In fact,  if 
$\supp V\subset \overline{B}(0,a)$ there is a constant 
$\alpha_d=\alpha_{d,a}$ so that  it suffices to take 
$|\lambda |\geq 2 \alpha_d \|V\|_{\infty}$ 
for such a bound to hold.
We note that for $\lambda \in \Real$, $|\lambda|\geq 2 \alpha_d \|V\|_\infty$
under these same assumptions on $V$,
\begin{equation}\label{eq:simplebd}
\| S_V(\lambda)-I\| \leq C |\lambda |^{-1}.
\end{equation}  This is relatively easy to see from an explicit 
representation of the scattering matrix;
see, for example, the proof of \cite[Lemma 3.1]{polar}.  The constants in the
statement of \cite[Lemma 3.1]{polar} and in 
(\ref{eq:simplebd}) can be chosen to depend only on 
the dimension, $\|V\|_{\infty}$ and the support of $V$.
We note that it follows from Lemma \ref{l:derivest}, (\ref{eq:simplebd}),
and (\ref{eq:frid}) that as $r \rightarrow \infty$
\begin{equation}\label{eq:almostsame}
\int_0^r \frac{n_V(t)}{t} dt
= \int_0^\pi \ln |\det S_V(r e^{i\theta})| d\theta +O(r^{d-1}). 
\end{equation}

Let
\begin{equation}\label{eq:rho}
\rho(z)\defeq \ln \frac{1+ \sqrt{1-z^2}}{z}-\sqrt{1-z^2}, \; 0<\arg z <\pi.
\end{equation}
This
is a function which arises in studying the asymptotics of Bessel functions; see
\cite{olverrs}.  To define the square root which appears here, take 
the branch cut on the negative real axis and define $\rho$ to 
be a continuous function in $\{ 0<\arg z<\pi\} \cup (0,1)$
and use the 
principal branches of the logarithm and the square root when $z\in(0,1)$.

We use some notation of \cite{stefanov}.  Set, for $0 < \theta<\pi$, 
\begin{equation}\label{eq:hd}
h_d(\theta)\defeq \frac{4}{(d-2)!}\int_0^{\infty} \frac{[-\Re \rho]_+(t e^{i\theta})}
{t^{d+1}} dt
\end{equation}
and set $h_d(0)=0$, $h_d(\pi)=0$.
Now set
\begin{equation}\label{eq:cd}
c_d %2\frac{\vol^2 (B(0,1))}{(2\pi)^d} +A_{S^{n-1}}
\defeq \frac{d}{2\pi}\int_0^\pi h_d(\theta) d\theta = 
\frac{2d}{\pi (d-2)!}\int _{\Im z>0 }\frac{[-\Re \rho]_+(z)}{|z|^{d+2}} dx dy.
\end{equation}
This is the constant $c_d$ which appears in (\ref{eq:stupperbd}).

We recall the following result of \cite[Theorem 5]{stefanov}, which
we paraphrase to suit our setting; \cite[Theorem 5]{stefanov}
actually covers a much larger class of operators.
\begin{thm}(from \cite[Theorem 5]{stefanov}) \label{t:stefanov}
 Let $V\in L^{\infty}(\Real^d)$
be supported in $\overline{B}(0,a)$.\\
(a) For any $\theta\in [0,\pi], $
\begin{equation}
\ln|s_V(r e^{i \theta})|\leq h_d(\theta ) a^d r^d+o(r^d)\; \text{as $r\rightarrow 
\infty$},
\end{equation}
and the remainder term depends on $V$, and is uniform for $0<\delta \leq
\theta\leq \pi -\delta$ for any $\delta \in (0,\pi)$.
\\
(b) For any $\delta>0$, 
$$\ln|s_V(r e^{i\theta})|\leq 
(h_d(\theta)a^d+\delta)r^d+o(r^d)\; \text{as $r\rightarrow 
\infty$}$$
uniformly in $\theta\in [0,\pi]$.
\end{thm}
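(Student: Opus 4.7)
The plan is to follow Stefanov's argument from \cite{stefanov}, since this theorem is simply his Theorem 5 paraphrased. The strategy combines a Fredholm-determinant representation of $s_V(\lambda)$ with sharp Debye-type asymptotics for Bessel and Hankel functions; those asymptotics are precisely what produce the function $\rho$ of (\ref{eq:rho}) and hence $h_d(\theta)$.

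First I would reduce to the unit ball via the rescaling $x \mapsto ax$, which by homogeneity of the resolvent produces an overall factor $a^d$ in the growth rate. Then one writes $s_V(\lambda)$ (up to inessential factors with controlled asymptotics) as a Fredholm determinant of an operator whose kernel is built from $V$ and the outgoing free Green's function. Expanding that Green's function in spherical harmonics on $\mathbb{S}^{d-1}$ decomposes the determinant into channels indexed by $\ell$, each involving the products $J_{\nu(\ell)}(\lambda|x|)\,H^{(1)}_{\nu(\ell)}(\lambda|y|)$ with $\nu(\ell)=\ell+(d-2)/2$, and standard bounds on Fredholm determinants convert the growth of $\ln|s_V|$ into a sum over $\ell$ of logarithmic contributions from these channels.

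The crucial input is Debye's uniform expansion: for $\arg\lambda\in (0,\pi)$ and $\nu,|\lambda|$ both large with $t=\nu/|\lambda|$ bounded, one has $\ln|H^{(1)}_\nu(\lambda)|\sim -|\lambda|\,\Re\rho(t e^{i\theta})$, with exponential growth exactly where $\Re\rho<0$. Summing the resulting bound over $\ell$, weighted by the spherical-harmonic multiplicity $N^d_\ell \sim 2\ell^{d-2}/(d-2)!$, and converting the sum to an integral in the variable $t=\nu/|\lambda|$, one recovers exactly the integral defining $h_d(\theta)$ in (\ref{eq:hd}). The cutoff $[-\Re\rho]_+$ is intrinsic: only the exponentially growing channels contribute to leading order.

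The main obstacle is twofold. One must handle the transitional regime $\nu\approx|\lambda|$ where Debye's expansion degenerates; there Olver's uniform asymptotics involving Airy functions \cite{olverrs} are required, and the resulting contribution must be shown to be $o(r^d)$. Separately, one must account for the loss of uniformity as $\theta\to 0^+$ or $\theta\to\pi^-$ in part (a), which is intrinsic to the factorization since behavior near the real axis is delicate. Part (b) circumvents this by replacing $h_d$ by $h_d+\delta$ and allowing a crude global bound near the real axis, using in particular (\ref{eq:simplebd}) and Lemma \ref{l:derivest} to control the $\theta\to 0,\pi$ contributions up to the $\delta r^d$ slack.
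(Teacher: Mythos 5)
The paper does not prove this statement; it cites Stefanov's Theorem 5 and then explains, in the remarks immediately following, why that argument carries over to complex-valued $V$. Your sketch is instead an outline of Stefanov's own proof, so you are reproving an input the paper takes for granted rather than tracking what the paper actually does.

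Two concrete points. First, a gap in the sketch: for general (non-radial) $V$ the Fredholm determinant does \emph{not} decompose into channels indexed by $\ell$, since $VR_0\chi$ has no reason to respect the spherical-harmonic decomposition. What Stefanov (following Zworski) actually does is a singular-value estimate: the Bessel/Hankel asymptotics in each angular channel control the singular values of the free resolvent cut off between two balls, and a Weyl-type inequality $|\det(I+A)|\leq \prod_j(1+\mu_j(A))$ then converts this into a growth bound for $\ln|s_V|$. A literal channel factorization of $s_V$ would be valid only for radial $V$, which would prove only a special case of the theorem. Second, the only thing the paper actually contributes here is the extension to complex $V$, and your sketch does not engage with it as such. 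For part (a), one must note that self-adjointness enters Stefanov's proof only through a resolvent bound in the upper half-plane, and a comparable bound holds without self-adjointness. For part (b), the real-$V$ proof uses unitarity of $S_V$ on $\Real$ (so $|s_V(\lambda)|=1$ there); for complex $V$ this is replaced by the weaker bound $|\ln s_V(\lambda)|\leq C(1+|\lambda|)^{d-1}$ obtained from (\ref{eq:simplebd}) and Lemma \ref{l:derivest}. You do invoke those two estimates for (b), which is the right ingredient, but you present them as generic boundary control near $\theta = 0,\pi$ rather than as the replacement for unitarity, so the distinction between real and complex $V$ is never surfaced in your argument.
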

We remark that both of these statements are about ``large $r$'' behavior,
so that the possibility that $s_V$ has a finite number of poles in the 
upper half plane does not affect the validity of the statements.

It is important to note several things about the bounds in this
theorem.  One is that
although Stefanov's theorem is stated only
for self-adjoint operators (hence $V$ real)
it is equally valid when we allow complex-valued
potentials.  In fact, the proof of (a) in  
\cite[Theorem 5]{stefanov}
uses self-adjointness only to obtain a bound on the 
resolvent for $\lambda$ in the upper half plane.  A similar bound is
true for the operator $-\Delta + V$ when $V$ is complex-valued.
The proof of (b) uses the fact that for real $V$,
if $\lambda \in \Real$, $\ln |s_V(\lambda)|=1$.  For
complex-valued $V$, the proof in \cite{stefanov} of (b) can 
be adapted by using (\ref{eq:simplebd}) and Lemma \ref{l:derivest}
to show that for $\lambda \in \Real$, $|\lambda|\geq 2 \alpha_d \|V\|_{\infty}$,
$|\ln s_V(\lambda)| \leq C (1+|\lambda|)^{d-1}$.  Here $C$ can be chosen 
to depend only on
 $d$, $\|V\|_{\infty}$ and the diameter of the support of $V$.

Likewise,
the particulars of the operator enter only through the diameter of the 
support of the perturbation (for us, the diameter of
the support of $V$, which is $2a$) and the aforementioned bound on the 
resolvent in the good half plane $\Im \lambda>0$.  Thus, it is easy to see that 
the estimates of Theorem \ref{t:stefanov} are uniform in $V$ as
long as $\supp V \subset \overline{B}(0,a)$, $\|V\|_{\infty}\leq M$, 
and $r\geq 2 \alpha_d M$.

We note that the upper bound (\ref{eq:stupperbd}) 
on the integrated resonance-counting function holds with the
constant $c_d$ defined in (\ref{eq:cd}) even if $V$ is complex-valued.
This follows from the proof in \cite{stefanov}.  In fact, the 
proof uses the bounds recalled in Theorem \ref{t:stefanov}
and the identity (\ref{eq:frid}).  Together with
the bounds in Lemma \ref {l:derivest} and (\ref{eq:simplebd}), these 
prove (\ref{eq:stupperbd}), even when $V$ is complex-valued.

We shall want to understand the function $h_d(\theta)$ better.  Note that
for $0<\theta\leq \pi/2$, 
$$h_d(\pi/2+\theta)= h_d(\pi/2 -\theta).$$
This can be seen directly using the definition of $h_d$ and $\rho$.
\begin{lemma}\label{l:hdbasics}
The function $h_d(\theta)$, defined in (\ref{eq:hd}), is $C^1$ on
$(0,\pi)$.  Moreover, 
$$h_d'(0+) \defeq \lim_{\epsilon \downarrow 0} h_d'(\epsilon)
= \sqrt{\pi} \frac{ \Gamma\left( \frac{d-1}{2}\right)}
{(d-2)! \Gamma\left( 1+ \frac{d}{2}\right)}.
$$
\end{lemma}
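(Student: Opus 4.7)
The plan is to reduce everything to a direct computation with $\rho$. A short calculation starting from
$\rho(z)=\ln\frac{1+\sqrt{1-z^2}}{z}-\sqrt{1-z^2}$ gives, after the two $1/\sqrt{1-z^2}$ terms cancel,
$$\rho'(z)=-\frac{\sqrt{1-z^2}}{z}.$$
From $\rho(z)\sim -\ln z$ near $0$ one has $\Re\rho(te^{i\theta})\to +\infty$ as $t\downarrow 0$, and from the large-$|z|$ asymptotic $\sqrt{1-z^2}\sim -iz$ (the branch is fixed by $\sqrt{1-z^2}>0$ on $(0,1)$), one gets $\Re\rho(te^{i\theta})\sim -t\sin\theta$ as $t\to\infty$. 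Thus, for each $\theta\in(0,\pi)$, the function $t\mapsto\Re\rho(te^{i\theta})$ starts at $+\infty$ and tends to $-\infty$; I will verify that it is strictly decreasing along the ray (via $\frac{d}{dt}\rho(te^{i\theta})=-\sqrt{1-t^2e^{2i\theta}}/t$ and sign considerations), so there is a unique $t_\theta>0$ where it vanishes, and
$$h_d(\theta)=\frac{4}{(d-2)!}\int_{t_\theta}^\infty\frac{-\Re\rho(te^{i\theta})}{t^{d+1}}\,dt.$$

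Next I differentiate in $\theta$. Since the integrand vanishes at the lower endpoint $t_\theta$, the Leibniz boundary term is zero and, using $\partial_\theta\rho(te^{i\theta})=ite^{i\theta}\rho'(te^{i\theta})=-i\sqrt{1-t^2e^{2i\theta}}$,
$$h_d'(\theta)=-\frac{4}{(d-2)!}\int_{t_\theta}^\infty\frac{\Im\sqrt{1-t^2e^{2i\theta}}}{t^{d+1}}\,dt.$$
Smoothness of $t_\theta$ in $\theta$ comes from the implicit function theorem applied to $\Re\rho(te^{i\theta})=0$ (non-degeneracy of $\partial_t\Re\rho$ at $t_\theta$ follows from the strict monotonicity above), and continuity of the right-hand side in $\theta$, together with the uniform integrable bound $|\sqrt{1-t^2e^{2i\theta}}|\le C(1+t)$ for $d\ge 3$, gives $h_d\in C^1(0,\pi)$.

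For the limit at $0$, as $\theta\downarrow 0$ we have $t_\theta\to 1$ (at $\theta=0$, $\Re\rho$ vanishes identically on $(1,\infty)$). For fixed $t>1$, $1-t^2e^{2i\theta}$ approaches the negative real number $1-t^2$ from the lower half-plane, so with the given branch $\sqrt{1-t^2e^{2i\theta}}\to -i\sqrt{t^2-1}$, and $\Im\sqrt{1-t^2e^{2i\theta}}\to -\sqrt{t^2-1}$. Dominated convergence (same majorant) yields
$$h_d'(0+)=\frac{4}{(d-2)!}\int_1^\infty\frac{\sqrt{t^2-1}}{t^{d+1}}\,dt.$$
The substitution $t=\sec u$ turns this into $\int_0^{\pi/2}\sin^2 u\,\cos^{d-2}u\,du=\tfrac{1}{2}B(\tfrac{3}{2},\tfrac{d-1}{2})=\tfrac{\sqrt{\pi}\,\Gamma((d-1)/2)}{4\,\Gamma(1+d/2)}$, which after multiplying by $\tfrac{4}{(d-2)!}$ gives the claimed value.

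The main obstacle is the geometric/branch analysis of $\Re\rho$: confirming that $\{\Re\rho<0\}\cap\{\arg z=\theta\}$ is a single half-ray $(t_\theta,\infty)$ for every $\theta\in(0,\pi)$, that $t_\theta$ depends smoothly on $\theta$, and that $t_\theta\to 1$ as $\theta\downarrow 0$ with the correct branch limit of $\sqrt{1-t^2e^{2i\theta}}$. Once these facts are in place, both the differentiation under the integral sign and the passage to the limit at $\theta=0$ are routine applications of dominated convergence, and the final evaluation is a standard Beta integral.
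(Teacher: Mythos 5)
Your proposal is correct, and its overall skeleton matches the paper's: reduce the integral over $(0,\infty)$ to one over a half-ray $(t_\theta,\infty)$ where $\Re\rho<0$, differentiate under the integral sign using $\partial_\theta\rho(te^{i\theta})=-i\sqrt{1-t^2e^{2i\theta}}$ (the boundary term dropping out because $\Re\rho(t_\theta e^{i\theta})=0$), establish regularity of $t_\theta$, and pass to the limit $\theta\downarrow 0$. Where you diverge is in justifying the key geometric fact that $\{\Re\rho<0\}\cap\{\arg z=\theta\}$ is a single half-ray: the paper simply cites Olver's explicit parametrization of the curve $\Re\rho=0$, whereas you derive $\rho'(z)=-\sqrt{1-z^2}/z$ and argue strict monotonicity of $t\mapsto\Re\rho(te^{i\theta})$ from $\tfrac{d}{dt}\Re\rho=-\tfrac{1}{t}\Re\sqrt{1-t^2e^{2i\theta}}$ plus the observation that $1-z^2$ never meets $(-\infty,0]$ for $z$ in the open upper half plane, so the principal square root keeps positive real part. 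That makes your argument more self-contained, and it also transparently yields $t_\theta\to 1$ (from $\Re\rho\equiv0$ on the boundary ray $t>1$) and the branch limit $\sqrt{1-t^2e^{2i\theta}}\to -i\sqrt{t^2-1}$. You also carry out the final Beta-integral $\int_1^\infty t^{-d-1}\sqrt{t^2-1}\,dt=\tfrac12 B(\tfrac32,\tfrac{d-1}{2})$ explicitly, which the paper leaves as ``a computation.'' The only place you wave a hand is the phrase ``sign considerations'' for the monotonicity, but as noted the point is precisely that $1-z^2\notin(-\infty,0]$ for $\Im z>0$, so $\Re\sqrt{1-z^2}>0$; once that sentence is added the argument is complete.
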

\begin{proof}
We note \cite[Section 4]{olverrs} that 
$\Re \rho(z)<0 $ if $0<\arg z<\pi$ and $|z|>|z_0(\arg z)|$, where 
$z_0(\theta)$ is the
unique point in $\Complex$ with
 argument $\theta$ and which lies on the curve given by
$$ \pm (s \coth s  - s^2)^{1/2} + i(s^2 - s \tanh s)^{1/2}, 0 \leq s \leq  s_0. $$
Here $s_0$ is the positive solution of 
$\coth s =s. $
Furthermore, $\Re \rho(z)>0 $ if $z$ is in the upper half plane 
but $|z|<|z_0(\arg z)|.$
Hence, recalling the definition of $h_d$, we have
$$h_d(\theta)= \frac{4}{(d-2)!}\int_{|z_0(\theta)|}^{\infty} 
\frac{[-\Re \rho](t e^{i\theta})}
{t^{d+1}} dt.$$

% A computation shows that $|z_0|$ is a $C^1$ function 
%of $\theta$ for $\theta$ in $(0,\pi)$, and $\lim _{\epsilon \downarrow 0}
%\frac{\partial}{\partial \theta}{|z_0|}$ is finite.
Using the  definition of $\rho$ (\ref{eq:rho})
and the following comments, we see that $\rho$ 
is in fact a smooth function of $z$ with $0<\arg z<\pi$, $|z|>0$.
Since $|\rho(z)|/|z| \rightarrow 1$ when $|z| \rightarrow \infty$ in this
region, the integral defining $h_d$ is absolutely convergent.
Likewise, since 
$$\frac{\partial}{\partial \theta}\rho(t e^{i\theta})
= -i \sqrt{1-(t e^{i\theta})^2}$$
we have $$\left|\frac{-\Re \left[ \frac{\partial}{\partial \theta} \rho (t e^{i\theta}) \right]}
{t^{d+1}}\right| \leq C t^{-d}$$ and
the integral 
$$\int_{|z_0(\theta)|}^{\infty} 
\frac{-\Re \left[ \frac{\partial}{\partial \theta} \rho (t e^{i\theta}) \right]}
{t^{d+1}} dt$$
converges absolutely. 
A computation shows that $|z_0|$ is a $C^1$ function 
of $\theta$ for $\theta$ in $(0,\pi)$, and $\lim _{\epsilon \downarrow 0}
\frac{\partial}{\partial \theta}{|z_0|}$ is finite.
Thus, using that $\Re \rho(z_0(\theta))=0$ and the regularity of 
the derivative of $|z_0|(\theta)$, we get
$$\frac{d}{d\theta}h_d(\theta)=
\frac{4}{(d-2)!}\int_{|z_0(\theta)|}^{\infty} 
\frac{ \Re i \sqrt{1-(t e^{i\theta})^2  }} {t^{d+1}} dt
$$
which is continuous in $\theta$.
Thus $h_d$ is $C^1$ on $(0,\pi)$, 
$h_d'(0+)=\frac{4}{(d-2)!}\int_1^\infty \frac{\sqrt{t^2-1}}{t^{d+1}}dt,$
and a computation now finishes the proof of the lemma.
\end{proof}

If $d=3$, we can compute that
$$h_3(\theta)= \frac{4}{9} \left( \sin (3 \theta) + \Re 
\frac{ (1-z_0^2 (\theta))^{3/2}}
{|z_0(\theta)|^3 } \right)$$
where $z_0(\theta)$ is as in the proof of the lemma.  
We comment that the $\sin (3 \theta)$ term is missing from the first
remark following the statement of  \cite[Theorem 5]{stefanov}. 
 % This m not affect at all the validity
%of the results of \cite{stefanov}, but the remarks as written would seem
%to contradict the result on $h_d(0+)$ given above.  

 \section{Proof of Proposition \ref{p:angles}}

We can now give the proof of Proposition \ref{p:angles}, which 
follows by combining Theorem \ref{thm:crg}, Proposition \ref{p:showcrg},
and \cite[Theorem 5]{stefanov}.

Recall that $S_V(\lambda)$ is the scattering matrix associated with the operator
$-\Delta +V$, and $s_V(\lambda)=\det S_V(\lambda)$.  Then $s_V$ has a pole
at $\lambda$ if and only if $s_V$ has a zero at $-\lambda$, and the 
multiplicities coincide.  Moreover, with at most a finite number of 
exceptions, the poles of $s_V(\lambda)$ coincide, with multiplicity,
with the zeros of $R_V(\lambda)$.

If $s_V(\lambda)$ has poles in the closed 
upper half plane, it has only finitely
many, say $\lambda_1,...,\lambda_m$, where the poles
are repeated according to multiplicity.  Set  
$$f(\lambda)=\prod_{j=1}^m\frac{(\lambda -\lambda_j)}{\lambda+\lambda_j}
s_V(\lambda).$$  We check that 
$f$ satisfies the hypotheses of Proposition \ref{p:showcrg}.
Note that $f$ and $s_V(\lambda)$ have the same order
and they have the  same indicator function
for $0\leq \theta\leq \pi$.  We know that $s_V$ has order at most
$d$ by  \cite[Theorem 7]{zwpf}.  Moreover, for any $M$ chosen
large enough that $s_V$ has no zeros or poles bigger than $M$ on the real
line, for $r>M$ we have 
$$\int_0^r\frac{f'(t)}{f(t)}dt
= \int_M^r \frac{s_V'(t)}{s_V(t)}dt+ O(1). $$  Using 
(\ref{eq:simplebd}) and Lemma \ref{l:derivest}, we see that
$$\int_M^r \frac{s_V'(t)}{s_V(t)}dt= O(r^{d-1})\; \text{as $r\rightarrow 
\infty$}$$
yielding
 \begin{equation}\label{eq:argf}
\int_0^r\frac{f'(t)}{f(t)}dt=O(r^{d-1})\; \text{as $r\rightarrow 
\infty$}.
\end{equation}
  A similar argument gives the same bound for $r\rightarrow -\infty$. 
It remains to check the hypotheses on the indicator function; this is done 
in the next paragraph.

From \cite[Theorem 5]{stefanov}, recalled here
in Theorem \ref{t:stefanov}, for $0\leq \theta\leq \pi$ and large $r$,
$$r^{-d} \ln |f(r e^{i\theta})| \leq a^d h_d(\theta) + o(1)$$
where we have some uniformity in $\theta$-- see Theorem \ref{t:stefanov}.
Thus, using the equation (\ref{eq:frid})
 and (\ref{eq:argf})
$$\lim\sup_{r\rightarrow \infty}r^{-d} N_V(r)= \lim \sup_{r\rightarrow \infty}
r^{-d}\frac{1}{2\pi}\int_0^\pi \ln|f(re^{i\theta})|d\theta\leq 
\frac{a^d}{2\pi}\int_0^\pi h_d(\theta)d\theta.$$
But since  $V\in \mfm_a$,
$$\lim _{r\rightarrow \infty}r^{-d}N_V(r)= \frac{c_d a^d}{d}
=\frac{a^d}{2\pi }\int_0^{\pi}h_d(\theta)
d\theta$$
and
 we see that we must have
$$\lim\sup _{r \rightarrow \infty}r^{-d} \ln |f(r e^{i\theta})| 
= a^d h_d(\theta) \; \text{ for almost every}\;  \theta \in (0,\pi).$$
The left hand side of the above equation is the value of the indicator function
of $f$ at $\theta$.  But the indicator function
 of $f$ is continuous on $(0,\pi)$
\cite[Section 16, point a, page 54]{levin}, and so is $h_d(\theta)$. 
 Thus we must
have 
$$\lim\sup _{r \rightarrow \infty}r^{-d} \ln |f(r e^{i\theta})| 
= a^d h_d(\theta) \; \text{ for}\;  \theta \in (0,\pi).$$

Applying Proposition \ref{p:showcrg} 
to $f(\lambda)$, we see
that $f(\lambda)$ is a function of completely regular growth in 
the upper half plane.  Since $h_d(\theta)$ is a $C^1$ function of $\theta$
for $\theta\in (0,\pi)$, we get the proposition from
Theorem \ref{thm:crg}.

\section{Proof of Theorem \ref{thm:second}}

This section proves Theorem \ref{thm:second}.  We begin by outlining the 
strategy of the proof.

For $0<\varphi<\theta<2 \pi$, 
recall the notation $n_V(r,\varphi,\theta)$ for the 
number of poles of $R_V(\lambda)$ in the sector 
$\{z:\; |z|\leq r ,\; \varphi \leq \arg z \leq \theta \}.$
A representative claim of the theorem is that with $V(z)$, $\Omega$ 
as in the statement of the theorem, $0<\theta<\pi$, 
\begin{multline}\label{eq:onepart}
\int_{\Omega}\psi(z) n_{V(z)}(r,\pi,\theta+\pi)d\mathcal{L}(z) \\
= 
\frac{1}{2\pi d}\left[h_d'(\theta)+ d^2\int_{0}^{\theta} h_d(s) ds\right]a^dr^d 
\int_{\Omega}\psi(z)d\mathcal{L}(z)
+o(r^d)
\end{multline}
as $r\rightarrow \infty$
for any $\psi\in C_c(\Omega)$.  
We prove this via the intermediate step of showing that (\ref{eq:onepart})
holds
for $\psi$ which is the characteristic function of any suitable ball 
in $\Omega$, Proposition \ref{p:lessavg}.  To get (\ref{eq:onepart}) 
for $\psi\in C_c(\Omega)$, we cover the support of $\psi$ with the
union of a finite number of small disjoint balls and a set of small
volume.  On each small ball we can approximate $\psi$ by its value at
the center of the ball and apply Proposition \ref{p:lessavg}.  
This and the necessary
estimates are done in the proof of the theorem which ends this section.

The proof of Proposition \ref{p:lessavg} is done in a number of steps.  
We set
$$N_V(r,\varphi,\theta) = 
\int_0^r\frac{1}{t}(n_V(t,\varphi,\theta)-n_V(0,\varphi,\theta)) dt.$$
Lemma \ref{l:intangles} gives
$
\int_0^{\theta} N_V(r,\pi,\theta'+\pi)d\theta'$
 as a sum of two integrals involving $\ln |s_V|$ and an error of order
$r^{d-1}$.  This follows from an application of one-dimensional complex
analysis,  Lemma \ref{l:derivest} and (\ref{eq:simplebd}).  
Next we consider the function
$$\Psi(z,r,\rho)\defeq \frac{1}{\vol(B(z,\rho))}\int_{z'\in B(z,\rho)} \int_0^\theta
N_{V(z')}(r,\pi,\theta'+\pi)d\theta' d\mathcal{L}(z') .$$
Notice that here we are averaging over balls of varying center $z$.
Fix $\rho$ small, and consider this as a function of $z$ and $r$.  
Lemma \ref{l:intangles} is used to show that
$\Psi$ is the sum of a function $\Psi_1$ which 
is plurisubharmonic in $z$ and a function which is $O(r^{d-1})$.
The proof of Proposition \ref{p:avg1} uses a combination of 
properties of plurisubharmonic functions and the fact
that  $r^{-d} N_{V(z')}(r,\pi,\theta'+\pi)$ is not negative and can be (locally)
 uniformly
bounded above for large $r$ to prove 
an ``averaged'' in $\theta$ and $r$ version of (\ref{eq:onepart})
for $\psi$ the characteristic function of a ball in $\Omega$ satisfying
some conditions.  Propositions \ref{p:avg2} and then \ref{p:lessavg}
eliminate the need to average in $\theta$ and $r$,
using Lemma \ref{l:noavg}.  

The proofs of the other claims of Theorem \ref{thm:second} are quite
similar; the proofs of Proposition \ref{p:totalavg} and the final proof 
of the theorem 
indicate the differences.

Now we turn to proving the theorem.
We shall need an identity related to both (\ref{eq:frid}) and 
to what Levin calls a generalized formula of Jensen 
\cite[Chapter 3, section 2]{levin}.  We define, following \cite{levin}, for a 
 function $f$ meromorphic
in a neighborhood of $\arg z= \theta$ and with $|f(0)|=1$,
\begin{equation}
J^r_f(\theta)\defeq \int_0^r\frac{\ln |f(te^{i\theta})|}{t}dt.
\end{equation}
This integral is well-defined even if $f$ has a zero or pole with
argument $\theta$.

\begin{lemma}\label{l:newgfj}
Let $f$ be holomorphic in $\varphi \leq \arg z \leq \theta$, $|f(0)|=1$, $f$
have no zeros with argument $\varphi$ or $\theta$ and with norm less
than $r$, 
and let $m(r,\varphi,\theta)$ be the number of zeros of 
$f$ in the sector $\varphi < \arg z < \theta$, $|z|\leq r$.  
 Then
\begin{multline}\label{eq:zeroavg}
\int_0^r \frac{m(t,\varphi,\theta)}{t}dt \\= 
\frac{1}{2\pi}\int _0^r \frac{d}{d\theta} J_f^t(\theta)\frac{dt}{t}
+\frac{1}{2\pi} \int _0^r \frac{1}{t}
\int_0^t \frac{d}{ds} \arg f(s e^{i\varphi})ds\; dt
+ \frac{1}{2\pi}
\int_{\varphi}^\theta \ln|f(re^{i\omega})| d\omega.
\end{multline}
\end{lemma}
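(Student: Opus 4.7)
The plan is to prove (\ref{eq:zeroavg}) as a sectorial analogue of (\ref{eq:frid}), by repeating the argument-principle derivation of \cite[Lemma 6.1]{froese} with the upper half-disc replaced by the sector $D_t \defeq \{z: |z|\leq t,\; \varphi \leq \arg z \leq \theta\}$. For each $t$ outside the (at most countable) set of moduli of zeros of $f$, the argument principle gives
\begin{equation*}
2\pi i\, m(t,\varphi,\theta) = \oint_{\partial D_t}\frac{f'(z)}{f(z)}\,dz,
\end{equation*}
and I parametrize $\partial D_t$ in three pieces: the outgoing ray $\gamma_1^t=\{se^{i\varphi}:0\leq s\leq t\}$, the arc $\gamma_2^t=\{te^{i\omega}:\varphi\leq\omega\leq\theta\}$, and the incoming ray $\gamma_3^t = \{se^{i\theta}:s\text{ from }t\text{ to }0\}$.

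Taking imaginary parts piece by piece, the two rays contribute $\int_0^t \tfrac{d}{ds}\arg f(se^{i\varphi})\,ds$ and $-\int_0^t \tfrac{d}{ds}\arg f(se^{i\theta})\,ds$ respectively; on the arc I use the polar form of the Cauchy--Riemann relation for $\log f$, namely $\Im(ite^{i\omega} f'/f)=\Re(te^{i\omega} f'/f)=t\,\partial_t\ln|f(te^{i\omega})|$, to obtain $t\,\tfrac{d}{dt}\int_\varphi^\theta\ln|f(te^{i\omega})|\,d\omega$. The resulting pointwise identity
\begin{equation*}
2\pi\, m(t,\varphi,\theta) = \int_0^t \tfrac{d}{ds}\arg f(se^{i\varphi})\,ds - \int_0^t \tfrac{d}{ds}\arg f(se^{i\theta})\,ds + t\,\frac{d}{dt}\int_\varphi^\theta\ln|f(te^{i\omega})|\,d\omega
\end{equation*}
is then divided by $t$ and integrated from $0$ to $r$; the arc term telescopes to $\int_\varphi^\theta \ln|f(re^{i\omega})|\,d\omega$, with the lower endpoint contribution vanishing because $|f(0)|=1$. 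For the $\gamma_3^t$ term I apply polar Cauchy--Riemann once more, in the form $\partial_\omega\ln|f(se^{i\omega})|=-s\,\partial_s\arg f(se^{i\omega})$, to recognize
\begin{equation*}
-\int_0^t \tfrac{d}{ds}\arg f(se^{i\theta})\,ds = \int_0^t\tfrac{1}{s}\,\partial_\omega\ln|f(se^{i\omega})|\big|_{\omega=\theta}\,ds = \tfrac{d}{d\theta}J_f^t(\theta),
\end{equation*}
which, after division by $2\pi$, produces the first term on the right-hand side of (\ref{eq:zeroavg}); the $\gamma_1^t$ contribution is already in the stated form.

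The main care required is bookkeeping rather than new analysis: selecting compatible continuous branches of $\log f$ on the two rays (possible since the hypothesis precludes zeros there), justifying the interchange of $\tfrac{d}{dt}$ with the $d\omega$-integral on the arc (routine, as $\ln|f|$ is smooth off the zero set), and absorbing the at-most-countable set of exceptional $t$ at which a zero sits on $|z|=t$ by a right-continuity or monotone-limit argument, since both sides of (\ref{eq:zeroavg}) are right-continuous in $r$.
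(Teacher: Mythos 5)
Your proof is correct and takes essentially the same route as the paper's: apply the argument principle over the sector $\{|z|\le t,\ \varphi\le\arg z\le\theta\}$, use the polar Cauchy--Riemann relations to rewrite the boundary contributions, and then divide the resulting pointwise identity by $2\pi t$ and integrate in $t$ from $0$ to $r$. The paper delegates the derivation of that pointwise identity to Levin's treatment of the generalized Jensen formula, whereas you write out the three boundary pieces explicitly; the only cosmetic difference is that the paper applies Cauchy--Riemann to the ray $\arg z=\theta$ already in the intermediate identity, while you do so after integrating, but both yield the term $\tfrac{1}{2\pi}\int_0^r \tfrac{d}{d\theta}J_f^t(\theta)\,\tfrac{dt}{t}$.
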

\begin{proof}
Using the argument principle and 
the Cauchy-Riemann equations just as in \cite[Chapter 3, section 2]{levin}
we see
that
\begin{align*}
2\pi m(r',\varphi, \theta)&=\int_0 ^{r'} \frac{\partial}{\partial t} \arg
f(te^{i\varphi})dt
+ \int _0^{r'} \frac{1}{t} \frac{\partial}{\partial \theta} \ln|f(te^{i\theta})|dt
+ r' \int _\varphi ^\theta \frac{\partial}{\partial r'} \ln |f(r'e^{i \omega})|
d\omega
\end{align*}
when there are no zeros on the boundary of the sector.
As in \cite{levin}, by dividing by $2\pi r'$ and integrating from $0$ to $r$
in $r'$ we obtain the lemma.  
\end{proof}

%$$\varphi(z,r,\rho) 
%= \frac{1}{2\pi r^d} \frac{1}{\pi \rho^2}
%\int_{B(z,\rho)} \int_0^{\pi} \log \det 
%S_{V(z')}(re^{i\theta})d\theta d\mathcal{L}(z')$$

%**may want to remove this paragraph now**
%For $0<\varphi<\theta<2 \pi$, 
%recall the notation $n_V(r,\varphi,\theta)$ for the 
%number of poles of $R_V(\lambda)$ in the sector 
%$\{z:\; |z|\leq r ,\; \varphi < \arg z < \theta \}.$
%Moreover, we set
%$$N_V(r,\varphi,\theta) = 
%\int_0^r\frac{1}{t}(n_V(t,\varphi,\theta)-n_V(0,\varphi,\theta)) dt.$$

We note that $|s_V(0)|=1$, since $s_V(\lambda)s_V(-\lambda)=1$.
\begin{lemma}\label{l:intangles}
Suppose $V\in L^{\infty}_{\comp}(\Real^d) $.  Then for $0<\theta<\pi$
$$\int_0^\theta N_V(r,\pi,\theta'+\pi) d \theta'
= \frac{1}{2\pi}\int_0^r J_{s_V}^t(\theta)\frac{dt}{t}
 +\frac{1}{2\pi} \int_0^\theta \int_0 ^{\theta'} \ln |s_V(r e^{i\omega})|d\omega d\theta'
+ O(r^{d-1})$$
as $r\rightarrow \infty.$
The error can be bounded by $c \langle r^{d-1}\rangle $ where the constant
depends only on $\|V\|_\infty$, the support of $V$, and $d$.
\end{lemma}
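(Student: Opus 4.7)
The plan is to apply Lemma \ref{l:newgfj} (the generalized Jensen formula) with $\varphi=0$ to (a modification of) $s_V$ on the sector $\{0\le\arg z\le\theta'\}$, integrate the resulting identity in $\theta'$ over $[0,\theta]$, and convert zeros of $s_V$ in the upper half plane into poles of $R_V$ in the lower half plane via $s_V(\lambda)s_V(-\lambda)=1$.

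First I would cancel the finitely many poles of $s_V$ in the closed upper half plane (at $\lambda_1,\dots,\lambda_m$, say) by setting $f(\lambda)\defeq\prod_{j=1}^m\frac{\lambda-\lambda_j}{\lambda+\lambda_j}s_V(\lambda)$, which is holomorphic on the closed upper half plane with $|f(0)|=|s_V(0)|=1$. The rational factor changes $\arg f$, $J_f^t$, and $\ln|f(re^{i\omega})|$ from their $s_V$ counterparts by at most $O(\ln t)$, $O(1)$, and $O(1/r)$ respectively, all absorbable into the $O(r^{d-1})$ error. The identity $s_V(\lambda)s_V(-\lambda)=1$ puts zeros of $f$ in $\{0\le\arg\mu\le\theta',\,|\mu|\le r\}$ into multiplicity-preserving bijection, up to $O(1)$, with poles of $R_V$ in $\{\pi\le\arg\lambda\le\pi+\theta',\,|\lambda|\le r\}$; hence
$$N_V(r,\pi,\theta'+\pi)=\int_0^r\frac{m_f(t,0,\theta')}{t}\,dt+O(\ln r).$$

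Next I would apply Lemma \ref{l:newgfj} to $f$ with $\varphi=0$ and upper angle $\theta'$. The hypotheses hold for a.e.\ $\theta'\in(0,\theta)$: the positive real axis contains only finitely many zeros of $s_V$ by (\ref{eq:simplebd}), and the ray $\arg z=\theta'$ has a discrete zero set. The lemma produces three terms; I would show the middle (``argument'') term
$$\frac{1}{2\pi}\int_0^r\frac{1}{t}\int_0^t\frac{d}{ds}\arg f(s)\,ds\,dt$$
is $O(r^{d-1})$: by Lemma \ref{l:derivest}, $|(d/ds)\arg s_V(s)|\le C_V|s|^{d-2}$ for $|s|$ large, hence $|\arg s_V(t)|\le Ct^{d-1}+C'$, and the double integration then gives $O(r^{d-1})$.

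Finally I would integrate over $\theta'\in[0,\theta]$. Fubini converts the first term into $(2\pi)^{-1}\int_0^r[J_f^t(\theta)-J_f^t(0)]/t\,dt$. The subtracted piece is negligible: by (\ref{eq:simplebd}), $|\ln|s_V(u)||=O(1/u)$ for large real $u$ and is locally integrable at the finitely many real zeros of $s_V$, so $J_{s_V}^t(0)$ (and hence $J_f^t(0)$) remains uniformly bounded in $t$, giving $\int_0^r J_f^t(0)/t\,dt=O(\ln r)$. The third term integrates cleanly to the stated double integral. Collecting all pieces, replacing $f$ by $s_V$ at each final step, and absorbing all $O(\ln r)$ errors into $O(r^{d-1})$ gives the lemma. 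The main obstacle is simply the bookkeeping of error terms; the essential analytical inputs are Lemma \ref{l:derivest} and (\ref{eq:simplebd}), whose uniformity in $\|V\|_\infty$ and $\supp V$ passes directly to the constant in the $O(r^{d-1})$ bound.
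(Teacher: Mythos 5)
Your overall strategy mirrors the paper's: introduce the same auxiliary function $f(\lambda)=\prod_j\frac{\lambda-\lambda_j}{\lambda+\lambda_j}s_V(\lambda)$ to clear poles of $s_V$ in the closed upper half plane, apply Lemma~\ref{l:newgfj} with $\varphi=0$, integrate in $\theta'$, and control the argument and $J^t_\cdot(0)$ terms via Lemma~\ref{l:derivest} and~(\ref{eq:simplebd}). The treatment of the rational factor and the estimate on the argument term ($O(r^{d-1})$ via integrating $|s|^{d-2}$ twice against $dt/t$) are both fine and match what the paper does.

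However, there is a genuine error in your estimate of the $J^t(0)$ term. You assert that~(\ref{eq:simplebd}) gives $|\ln|s_V(u)||=O(1/u)$ for large real $u$, so that $J^t_{s_V}(0)$ is uniformly bounded and $\int_0^rJ^t_f(0)\,dt/t=O(\ln r)$. This is false in general: (\ref{eq:simplebd}) is an operator-norm bound on $S_V(\lambda)-I$ acting on $L^2(S^{d-1})$, and a small operator norm does not control the Fredholm determinant; the trace norm $\|S_V(\lambda)-I\|_1$ grows in $\lambda$ (morally like the number of effective spherical harmonics, $\sim\lambda^{d-1}$). The correct estimate available from the paper's tools is the one the paper itself states in Section~3: combining Lemma~\ref{l:derivest} with~(\ref{eq:simplebd}) yields $|\ln s_V(\lambda)|\leq C(1+|\lambda|)^{d-1}$ for real $\lambda$ with $|\lambda|\geq 2\alpha_d\|V\|_\infty$. (For real-valued $V$ one has $|s_V|\equiv 1$ on $\Real$ and the term vanishes, but the lemma is claimed for general complex-valued $V$.) With the correct bound you get $J^t_{s_V}(0)=O(t^{d-1})$ and hence $\int_M^rJ^t_{s_V}(0)\,dt/t=O(r^{d-1})$, which is still within the stated error budget; so the final conclusion is unaffected, but the intermediate reasoning as written is wrong and should be fixed.
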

\begin{proof}
Recall that with at most a finite number of exceptions $\lambda_0$ is a pole of 
$R_V(\lambda)$ if and only if $-\lambda_0$ is a zero of $s_V(\lambda)$, and
the multiplicities coincide.  As in the proof of Proposition \ref{p:angles},
if $s_V(\lambda)$ has poles $\lambda_1,...,\lambda_m$ in the closed
upper half plane we introduce the function 
$$f(\lambda)=\frac{(\lambda-\lambda_1)\cdot \cdot \cdot (\lambda-\lambda_m)}
{(\lambda+\lambda_1)\cdot \cdot \cdot (\lambda+\lambda_m)}s_V(\lambda)
$$
which is holomorphic in the closed upper half plane. 
The poles of $s_V$ in the closed upper half plane correspond to eigenvalues
and the number of such poles can be bounded by a constant depending on $d$,
$\|V\|_\infty$,  and the support of $V$.
 Note that
$f$ has no 
zeros on the real line.  
Moreover, $\ln|f(re^{i\theta})|= \ln|s_V(re^{i\theta})|+O(1)$ for $r
\rightarrow \infty$, $0\leq \theta\leq \pi$ and that $s_V$ and $f$ 
have all but finitely many of the same zeros.

Choose $0<M<\infty$ so that $s_V(\lambda)$ has no zeros in the upper
half plane with norm less than or equal to $M$.  This constant $M$
can be chosen to depend only on $\|V\|_{\infty}$, the support of $V$,
and $d$.  Now by using the 
relationship between the poles of $R_V(\lambda)$ and
the zeros of $s_V=\det S_V$, the relationships between $f$ and 
$s_V$ just mentioned, and applying Lemma \ref{l:newgfj} to $f$ we
see that for $r>M$, $0<\theta'<\pi,$
\begin{multline}\label{eq:int}
N_V(t,\pi,\theta'+\pi)
= \frac{1}{2\pi} \int_M^r\frac{\partial}{\partial \theta'}
J^t_{s_V}(\theta') \frac{dt}{t} 
+\frac{1}{2\pi}\int_M^r\frac{1}{t}\int_M^t
\frac{d}{dt'} \arg s_V(t')dt' dt \\
+ \frac{1}{2\pi}\int_0^{\theta'}\ln|s_V(r e^{i\omega})|d\omega +O((\ln r)^2)
\end{multline}
if $f$ has no zeros with argument $\theta'$ and norm not exceeding $r$.
Here we are using that $\int_0^M\frac{\partial}{\partial \theta'}
J^t_{f}(\theta') \frac{dt}{t}=O(1)$
and 
\begin{multline*}
\int_0^r\frac{1}{t}\int_0^t
\frac{d}{dt'} \arg f(t')dt' dt \\= \int_M^r \frac{1}{t}\int_M^t
\frac{d}{dt'} \arg f(t')dt' dt + \int_M^r \frac{1}{t}\int_0^M
\frac{d}{dt'} \arg f(t')dt' dt
+ \int_0^M \frac{1}{t}\int_0^t
\frac{d}{dt'} \arg f(t')dt' dt.
\end{multline*}
But $\int_M^r \frac{1}{t}\int_0^M
\frac{d}{dt'} \arg f(t')dt' dt= O(\ln r)$ and
 $\int_0^M \frac{1}{t}\int_0^t
\frac{d}{dt'} \arg f(t')dt' dt= O(1)$.   Additionally, for $t\rightarrow
\infty$,
$\frac{d}{dt} \arg f(t)=\frac{d}{dt}\arg s_V(t)+O(1/t)$.  These remainders
can be bounded using constants depending only on $\|V\|_{\infty}$, 
$\supp V$, and $d$.

Notice that for fixed value of $r>M$ there are only finitely many
values of $\theta'$ with $s_V$ having a zero with argument $\theta'$ and
norm at most $r$.  
We integrate (\ref{eq:int}) in $\theta'$ from $0$ to $\theta$
and, as in the proof of Jensen's equality, use the fact 
that both sides of the equation
below  are continuous functions of
$\theta$, to get
\begin{multline*}
\int_0^\theta N_V(r,\pi,\theta'+\pi) d \theta'
= \frac{1}{2\pi}\int_M^r J_{s_V}^t(\theta)\frac{dt}{t} 
- \frac{1}{2\pi} \int_M^r J^t_{s_V}(0)\frac{dt}{t}  \\
+ \frac{\theta}{2\pi}\int_M^r\frac{1}{t} \int_M^t
\frac{d}{dt'} \arg s_V(t')dt' dt
+\frac{1}{2\pi} \int_0^\theta \int_0 ^{\theta'} 
\ln |s_V(r e^{i\omega})|d\omega d\theta' +O((\ln r)^2).
\end{multline*}

The bounds of Lemma \ref{l:derivest} and (\ref{eq:simplebd}) mean 
that as $r\rightarrow \infty$
$$\frac{1}{2\pi} \int_M^r J^t_{s_V}(0)\frac{dt}{t}= O(r^{d-1})$$
and $$\frac{\theta}{2\pi}\int_M^r\frac{1}{t} \int_M^t
\frac{d}{dt'} \arg s_V(t')dt' dt =O(r^{d-1})$$
where the bounds can be made uniform in $V$ with support contained
in a fixed compact set and $\|V\|_{\infty}$ bounded.
Moreover, we note that
$\int_0^M J^t_{S_V}(\theta)\frac{dt}{t}=O(1).$ 
\end{proof}

We shall need some notation for the results which follow.   
Let  $\Omega\subset \Complex^{d'}$ be an open set containing a point $z_0$. 
For
$\rho>0$ small enough that $B(z_0,\rho)\subset \Omega$ we define
$\Omega_\rho$ to be the connected component of 
$\{z\in \Omega: \dist(z,\Omega^c)> \rho\}$ which contains $z_0$.

\begin{prop}\label{p:avg1}
Let $V$, $z_0$, $\Omega$ satisfy the assumptions of Theorem \ref{thm:first},
let $\rho>0$ be small enough that $B(z_0,2\rho)\subset \Omega$, and let
$\Omega_\rho$ be as defined above.
Then for $z\in \Omega_{2\rho}$, $0<\theta<\pi$, 
\begin{multline*}
\Psi(z,r,\rho)\defeq \frac{1}{\vol(B(z,\rho))}\int_{z'\in B(z,\rho)} \int_0^\theta
N_{V(z')}(r,\pi,\theta'+\pi)d\theta' d\mathcal{L}(z') \\
= \frac{1}{2\pi}a^d r^d\left( \frac{1}{d^2}h_d(\theta)+\int_0^\theta 
\int_0^{\theta'} h_d(\omega)d\omega d\theta'\right) +o(r^d)
\end{multline*}
as $r\rightarrow \infty$.
\end{prop}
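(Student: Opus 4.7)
The strategy is to apply Lemma \ref{l:intangles} pointwise in $z'\in B(z,\rho)$, average over $z'$, and then exploit the plurisubharmonicity (in $z$) of $\ln|s_{V(z)}(\lambda)|$---which holds because $s_{V(z)}(\lambda)$ is holomorphic in $z$---to reduce the claim to an $L^1_{\mathrm{loc}}$-convergence statement for plurisubharmonic functions. Write
\[
A(z',r,\theta) := \int_0^r J^t_{s_{V(z')}}(\theta)\,\frac{dt}{t}, \qquad B(z',r,\theta) := \int_0^\theta\!\int_0^{\theta'}\!\ln|s_{V(z')}(re^{i\omega})|\,d\omega\,d\theta'.
\]
Applying Lemma \ref{l:intangles} to each $V(z')$ (with $O(r^{d-1})$ uniform for $z'\in\Omega_\rho$, since $\|V(z')\|_\infty$ is bounded and $\supp V(z')$ is fixed) and averaging over $z'\in B(z,\rho)$ gives
\[
\Psi(z,r,\rho) = \frac{1}{2\pi}[\tilde A(z,r,\theta) + \tilde B(z,r,\theta)] + O(r^{d-1}),
\]
where $\tilde A,\tilde B$ are the $z'$-averages of $A,B$ over $B(z,\rho)$. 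Each of $A,B,\tilde A,\tilde B$ is plurisubharmonic in $z$. Setting
\[
C := \frac{a^d}{2\pi}\left[\frac{1}{d^2}h_d(\theta) + \int_0^\theta\!\int_0^{\theta'}\!h_d(\omega)\,d\omega\,d\theta'\right],
\]
Theorem \ref{t:stefanov}(b) applied uniformly over $\Omega_\rho$ shows that for any sequence $r_n\to\infty$ the plurisubharmonic functions $u_n(z) := r_n^{-d}[A(z,r_n,\theta)+B(z,r_n,\theta)] - 2\pi C$ satisfy $u_n \leq \epsilon_n \to 0$ uniformly on compact subsets of $\Omega_\rho$.

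The critical step is to establish $u_n(z_0)\to 0$. Since $V(z_0)\in\mfm_a$, the proof of Proposition \ref{p:angles} shows that $s_{V(z_0)}$ is of completely regular growth in the upper half-plane with indicator $a^d h_d$, so $r^{-d}\ln|s_{V(z_0)}(re^{i\omega})|\to a^d h_d(\omega)$ uniformly in $\omega$ on compact subsets of $(0,\pi)$, as $r\to\infty$ outside an exceptional set of zero relative measure. To handle the bad $r$, I would use (\ref{eq:almostsame}) together with $V(z_0)\in\mfm_a$, which forces $r^{-d}\int_0^\pi \ln|s_{V(z_0)}(re^{i\theta})|\,d\theta \to a^d\int_0^\pi h_d(\theta)\,d\theta$ along \emph{every} $r$; combined with Stefanov's one-sided pointwise bound, a decomposition into positive and negative parts yields $\int_0^\pi\bigl|r^{-d}\ln|s_{V(z_0)}(re^{i\omega})| - a^d h_d(\omega)\bigr|\,d\omega\to 0$, and integration in $\theta'$ then gives $r^{-d}B(z_0,r,\theta)\to a^d\int_0^\theta\!\int_0^{\theta'}h_d(\omega)\,d\omega\,d\theta'$ along every $r$. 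A change of variables $s=ru$ in $A(z_0,r,\theta) = \int_0^r \ln(r/s)\ln|s_{V(z_0)}(se^{i\theta})|/s\,ds$ combined with the same cancellation argument handles $r^{-d}A(z_0,r,\theta)\to a^d h_d(\theta)/d^2$, so $u_n(z_0)\to 0$.

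With $u_n$ plurisubharmonic, $u_n\leq\epsilon_n\to 0$ uniformly on compacts, and $u_n(z_0)\to 0$, the submean inequality gives
\[
\int_{B(z_0,\rho)}u_n\,d\mathcal{L}(z')\geq\vol(B(z_0,\rho))\,u_n(z_0)\to 0,
\]
while the upper bound gives $\int_{B(z_0,\rho)}u_n\leq\epsilon_n\vol(B(z_0,\rho))\to 0$; since $u_n-\epsilon_n\leq 0$, this forces $u_n\to 0$ in $L^1(B(z_0,\rho))$. By the $L^1_{\mathrm{loc}}$-compactness of plurisubharmonic sequences bounded above, any subsequential $L^1_{\mathrm{loc}}$ limit $u^*$ is plurisubharmonic (after upper-semicontinuous regularization), $\leq 0$ on $\Omega$, and $=0$ almost everywhere on $B(z_0,\rho)$; since $\{u^*<0\}$ is open with zero Lebesgue measure on $B(z_0,\rho)$ it is empty there, so $u^*\equiv 0$ on $B(z_0,\rho)$, and the maximum principle on the connected set $\Omega$ forces $u^*\equiv 0$. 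Thus $u_n\to 0$ in $L^1_{\mathrm{loc}}(\Omega)$, and for $z\in\Omega_{2\rho}$ (with $\overline{B(z,\rho)}\subset\Omega_\rho$ compact) we obtain $\int_{B(z,\rho)}u_n\to 0$, i.e.\ $r_n^{-d}[\tilde A(z,r_n,\theta)+\tilde B(z,r_n,\theta)]\to 2\pi C$; combining with the first display yields $\Psi(z,r_n,\rho)/r_n^d\to C$. Since the sequence $r_n\to\infty$ was arbitrary, $\Psi(z,r,\rho) = Cr^d + o(r^d)$.

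The main obstacle is the $z_0$-analysis of the second paragraph: applying Proposition \ref{p:angles} with Fatou (via small-$\varphi$ approximations of $n_{V(z_0)}(r,\pi+\varphi,\pi+\theta')$) yields a lower bound on $\liminf u_n(z_0)$ that falls short of the target by $-\theta h_d'(0+)\,a^d/(2\pi d^2)$---the accumulation of resonances near the negative real axis---so closing this gap genuinely requires the cancellation argument made possible by $V(z_0)\in\mfm_a$ forcing the \emph{full} angular integral $\int_0^\pi\ln|s_{V(z_0)}|$ to attain Stefanov's upper bound asymptotically along every $r$.
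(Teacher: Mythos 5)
Your extension argument from $z_0$ to all of $\Omega_{2\rho}$ is correct but genuinely different from the paper's. The paper applies Proposition \ref{p:l-g} to the averaged plurisubharmonic functions $r^{-d}\Psi_1(\cdot,r,\rho)$ to produce a pluripolar exceptional set, and then invokes uniform continuity of $r^{-d}\Psi(\cdot,r,\rho)$ in $z$ to argue the exceptional set must be empty. You instead work with the un-averaged plurisubharmonic functions $u_n$, use the submean inequality plus the one-sided Stefanov bound to obtain $u_n\to 0$ in $L^1(B(z_0,\rho))$, then deploy $L^1_{\rm loc}$-compactness of plurisubharmonic sequences bounded above and the maximum principle to conclude $u_n\to 0$ in $L^1_{\rm loc}(\Omega)$. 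Both routes are sound; yours dispenses with the uniform-continuity step entirely, essentially re-deriving the consequence of Proposition \ref{p:l-g} by hand, and is arguably cleaner for readers who prefer measure-theoretic compactness over pluripotential-theoretic exceptional sets.

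However, your $z_0$-analysis has a real gap in the $A$-term. The cancellation argument you describe establishes $\int_0^\pi\bigl|r^{-d}\ln|s_{V(z_0)}(re^{i\omega})|-a^dh_d(\omega)\bigr|d\omega\to 0$, i.e.\ $L^1$ convergence in the \emph{angular} variable $\omega$, and this does correctly give $r^{-d}B(z_0,r,\theta)\to a^d\int_0^\theta\int_0^{\theta'}h_d$. But $A(z_0,r,\theta)=\int_0^r\ln(r/s)\ln|s_{V(z_0)}(se^{i\theta})|/s\,ds$ is an integral in the \emph{radial} variable at a single fixed angle $\theta$, and $L^1(\omega)$ convergence says nothing pointwise at $\theta$. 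After the substitution $s=ru$, you need $(ru)^{-d}\ln|s_{V(z_0)}(rue^{i\theta})|\to a^dh_d(\theta)$ in a $du$-averaged sense; the Stefanov bound gives only an \emph{upper} bound, completely regular growth gives pointwise convergence only for $s$ outside an $r$-set of zero relative measure, and you have no lower bound for $\ln|s_{V(z_0)}(se^{i\theta})|$ on that exceptional set. ``The same cancellation argument'' is therefore not available. The paper avoids this by applying \cite[Chapter III, Sec.\ 2, Lemma 2]{levin} to $s_{V(z_0)}$ (known to be of completely regular growth by Proposition \ref{p:showcrg}) together with Lemma \ref{l:intangles}, which handles the $A+B$ combination at once and, internally, supplies the needed control on the exceptional set. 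You should either cite that lemma, or supply the missing lower bound on $\ln|s_{V(z_0)}(se^{i\theta})|$ over the exceptional set (e.g.\ via a Boutroux--Cartan type minimum-modulus estimate); as written the step does not go through.
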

\begin{proof}
First note that since $0\leq d N_{V(z)}(z,\pi,\theta+\pi)\leq c_dr^da^d+o(r^d)$,
and the bound is uniform on compact sets of $z$, we get that holding $\rho$
fixed, 
$r^{-d}\Psi(\bullet,r,\rho)$ is a family uniformly continuous in $z$ for
$z$ in compact sets of $\overline{\Omega}_{2\rho}$. 

We shall use Lemma \ref{l:intangles}.  Note that by Stefanov's results
recalled in Theorem \ref{t:stefanov}, for large $r$
$$\frac{1}{2\pi}\int_0^r J^t_{s_{V(z)}}(\theta)\frac{dt}{t}
\leq \frac{1}{2\pi} \frac{1}{d^2}h_d(\theta) a^d r^d+o(r^d)$$
where the term $o(r^d)$ can be bounded uniformly in $z$ in 
compact sets of $\overline{\Omega}_{\rho}$.  Recall that
this is a statement about large $r$ behavior, and holds even if
$s_V(z)$ has poles in the upper half plane, since it has at most finitely 
many.  By the same 
argument, for large $r$
$$\int_0^\theta\int_0^{\theta'} \ln |s_{V(z)}(re^{i\omega})|d\omega d\theta'
\leq \int _0 ^\theta\int_0^{\theta'} h_d(\omega)d\omega d\theta' a^d r^d
+o(r^d).$$

Using Lemma \ref{l:intangles}, we find that
\begin{multline*}
\Psi(z,r,\rho)= \frac{1}{2\pi \Vol(B(z,\rho))}\int_{z'\in B(z,\rho)}
\int_0^r J_{s_V(z')}^t(\theta)\frac{dt}{t} d\mathcal{L}(z') \\ 
 +\frac{1}{2\pi\Vol(B(z,\rho))} \int_{z'\in B(z,\rho)}\int_0^\theta \int_0 ^{\theta'} \ln |s_{V(z')}(r e^{i\omega})|d\omega d\theta'd\mathcal{L}(z')
+ O(r^{d-1}).
\end{multline*}  
Let $M= 2 \alpha_d \max_{z\in \overline{\Omega_\rho}}\| V(z) \|_{\infty}$
and set, for $r>M$,
 \begin{multline*}
\Psi_1(z,r,\rho)=  \frac{1}{2\pi \Vol(B(z,\rho))}\int_{z'\in B(z,\rho)}
\int_{M}^r J_{s_{V(z')}}^t(\theta)\frac{dt}{t} d\mathcal{L}(z') \\
 +\frac{1}{2\pi \Vol(B(z,\rho))} \int_{z'\in B(z,\rho)}\int_0 ^\theta 
\int_0 ^{\theta'} \ln |s_{V(z')}(r e^{i\omega})|d\omega d\theta'd\mathcal{L}(z')
\end{multline*}
and note that 
$$\Psi(z,r,\rho)=\Psi_1(z,r,\rho)+O(r^{d-1}).$$
By the bounds above, 
\begin{equation}\label{eq:oneupperbd}
 \Psi_1(z,r,\rho)\leq \frac{1}{2\pi}\left( \frac{1}{d^2}h_d(\theta)
+ \int _0^\theta\int_0^{\theta'} h_d(\omega)d\omega d\theta'\right) a^d r^d
+o(r^d).
\end{equation}

Using \cite[Proposition I.14]{l-g} and
the fact that $\ln|s_{V(z)}(\lambda)|$ is a plurisubharmonic function
of $z\in \Omega$ when $|\lambda|>2 \alpha_d \|V(z)\|_{\infty}$ and $\lambda $ 
lies in the upper half plane, we see that 
$\Psi_1(z,r,\rho)$ is a plurisubharmonic function of $z\in \Omega_{2\rho}$.
Since by Proposition \ref{p:showcrg} $s_{V(z_0)}(\lambda)$ is of completely
regular growth in 
$0<\arg \lambda <\pi$, using Lemma \ref{l:intangles}
 and \cite[Chapter III, Sec. 2,
Lemma 2]{levin},
$$\lim_{r\rightarrow \infty} r^{-d} 
\int_0^{\theta}N_{V(z_0)}(r,\pi,\theta'+\pi) d\theta'
=\frac{1}{2\pi} \left(\frac{1}{d^2} h_d(\theta)+\int_0^\theta\int_0^{\theta'}
h_d(\omega)d\omega d\theta'\right) a^d.$$
By the most basic property of plurisubharmonic functions,
$$\Psi_1(z_0,r,\rho)\geq \frac{1}{2\pi}
\int_{M}^r J^t_{s_V(z_0)}(\theta)\frac{dt}{t} + \frac{1}{2\pi}\int_0^\theta
\int_0^{\theta'}\ln |s_{V(z_0)}(re^{i\omega})|d\omega d\theta'.$$
But the right hand side of this equation is 
$\int_0^\theta N_{V(z_0)}(r,\pi,\theta'+\pi)d\theta' +O(r^{d-1})$,
so we see that 
$$\lim \inf _{r\rightarrow \infty} 
r^{-d} \Psi_1(z_0,r,\rho)\geq  \frac{1}{2\pi} \left(\frac{1}{d^2} h_d(\theta)+\int_0^\theta \int_0^{\theta'}
h_d(\omega)d\omega d\theta'\right) a^d.$$
Combining this with (\ref{eq:oneupperbd}), we find
\begin{equation}
\lim _{r\rightarrow \infty} 
r^{-d} \Psi_1(z_0,r,\rho)
= \frac{1}{2\pi} \left(\frac{1}{d^2} h_d(\theta)+\int_0^\theta\int_0^{\theta'}
h_d(\omega)d\omega d\theta'\right) a^d.
\end{equation}
Using this and the upper bound (\ref{eq:oneupperbd}) on $\Psi_1$, since
$\Psi_1$ is plurisubharmonic in $z$ it follows from \cite[Proposition 1.39]{l-g}
(recalled here in Proposition \ref{p:l-g})
 that for any sequence
$\{r_j\}$, $r_j\rightarrow \infty$ there is a pluripolar set 
$E\subset \Omega_\rho$ (which may depend on the sequence) so that 
$$\lim \sup _{j\rightarrow \infty}r_j^{-d} \Psi_1(z,r_j,\rho)
= \frac{1}{2\pi} \left(\frac{1}{d^2} h_d(\theta)+\int_0^\theta\int_0^{\theta'}
h_d(\omega)d\omega d\theta'\right) a^d$$
for all $z\in \Omega_\rho \setminus E$.  Since $\lim_{r\rightarrow \infty}
r^{-d}(\Psi_1(z,r,\rho)-\Psi(z,r,\rho))=0$, the same conclusion holds
for $\Psi$ in place of $\Psi_1$.

Suppose there is some $z_1\in \Omega_\rho$ and some sequence $r_j\rightarrow
\infty$ so that 
$$\lim_{j\rightarrow \infty} r_j^{-d}\Psi(z_1,r_j,\rho) <\frac{1}{2\pi} \left(\frac{1}{d^2} h_d(\theta)+\int_0^\theta \int_0^{\theta'}
h_d(\omega)d\omega d\theta'\right) a^d.$$
Then, using the uniform continuity of $r^{-d}\Psi(z,r,\rho)$ in $z$, we find
there must be an $\epsilon>0$ so that 
$$ \lim \sup _{j\rightarrow \infty} r_j^{-d}\Psi(z,r_j,\rho) <\frac{1}{2\pi} \left(\frac{1}{d^2} h_d(\theta)+\int_0^\theta \int _0^{\theta'}
h_d(\omega)d\omega d\theta'\right) a^d
$$
for all $z\in B(z_1,\epsilon)$.  But since $B(z_1,\epsilon)$ is not contained
in a pluripolar set, we have a contradiction.  Thus
$$
\lim_{r\rightarrow \infty} r^{-d}\Psi(z,r,\rho) =
\frac{1}{2\pi} \left(\frac{1}{d^2} h_d(\theta)+\int_0^\theta\int_0^{\theta'}
h_d(\omega)d\omega d\theta'\right) a^d$$
for all $z\in \Omega_\rho$.
\end{proof}

The following lemma will be used to remove the need to 
average in $\theta$ as in Proposition \ref{p:avg1}.
\begin{lemma}\label{l:noavg}
Let $M(r,\theta)$ be a function so that for any fixed
positive $r_0>C_0$, $M(r_0,\theta)$ is a
non-decreasing function of $\theta$, and suppose 
$$\lim_{r\rightarrow \infty }r^{-d} \int_0^{\theta}M(r,\theta')d\theta'
=\alpha(\theta)$$
for $\theta_1<\theta<\theta_2$.  Then if $\alpha$ is differentiable
at $\theta$, then
$$\lim_{r\rightarrow \infty }r^{-d} M(r,\theta)=\alpha'(\theta).$$
\end{lemma}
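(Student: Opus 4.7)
The plan is a standard monotonicity sandwich argument. The key observation is that since $M(r,\cdot)$ is non-decreasing in $\theta$ for each fixed $r > C_0$, we can bound $M(r,\theta)$ above and below by normalized integrals of $M(r,\cdot)$ over short intervals to the right and left of $\theta$, respectively, and then relate those integrals to increments of $\alpha$.

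More concretely, I would fix $\theta \in (\theta_1, \theta_2)$ where $\alpha$ is differentiable, and choose $h > 0$ small enough that $[\theta - h, \theta + h] \subset (\theta_1, \theta_2)$. Using the monotonicity of $M(r, \cdot)$, for every $r > C_0$,
\begin{equation*}
\frac{1}{h} \int_{\theta - h}^{\theta} M(r, \theta')\, d\theta' \;\le\; M(r, \theta) \;\le\; \frac{1}{h} \int_{\theta}^{\theta + h} M(r, \theta')\, d\theta'.
\end{equation*}
Dividing by $r^d$ and writing the integrals as differences of $\int_0^{\cdot} M(r, \theta')\, d\theta'$, the hypothesis gives
\begin{equation*}
\frac{\alpha(\theta) - \alpha(\theta - h)}{h} \;\le\; \liminf_{r \to \infty} r^{-d} M(r, \theta) \;\le\; \limsup_{r \to \infty} r^{-d} M(r, \theta) \;\le\; \frac{\alpha(\theta + h) - \alpha(\theta)}{h}.
\end{equation*}
Finally, letting $h \downarrow 0$ and using differentiability of $\alpha$ at $\theta$, both bounding quotients converge to $\alpha'(\theta)$, so $\liminf$ and $\limsup$ coincide with $\alpha'(\theta)$.

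There is no real obstacle here; the only point to be mildly careful about is that one must only take the limit $r \to \infty$ first (for each fixed $h$), then let $h \to 0$, since swapping the order would require more than differentiability of $\alpha$ at a single point. The monotonicity hypothesis is exactly what lets the one-sided integral averages pinch $M(r,\theta)$ itself, converting convergence of the integrated quantity into pointwise convergence — this is the only place where the hypothesis on $M$ beyond integrability is used.
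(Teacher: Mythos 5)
Your proof is correct and is essentially the same as the paper's: both use the monotonicity of $M(r,\cdot)$ to sandwich $M(r,\theta)$ between one-sided difference quotients of $\int_0^\theta M(r,\theta')\,d\theta'$, pass to $\liminf$/$\limsup$ in $r$, and then let the small increment tend to zero using differentiability of $\alpha$ at $\theta$. The only cosmetic difference is that you write $h$ where the paper writes $\epsilon$.
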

\begin{proof}
Let $\epsilon >0$.  Then, since $M(r,\theta)$ is non-decreasing in 
$\theta$, 
$$\int_0^{\theta+\epsilon}M(r,\theta')d\theta' -
\int_0^{\theta}M(r,\theta')d\theta' \geq \epsilon M(r,\theta)$$
which, under rearrangement, yields
$$r^{-d} M(r,\theta)\leq r^{-d}
\frac{ \int_0^{\theta+\epsilon} M(r,\theta')d\theta' - 
\int_0^{\theta} M(r,\theta')d\theta'}{\epsilon}.$$
Thus
$$\lim \sup _{r\rightarrow \infty}r^{-d} M(r,\theta)\leq 
\frac{\alpha(\theta+\epsilon)-\alpha(\theta)}{\epsilon}.$$
Likewise, we find
$$\lim \inf _{r\rightarrow \infty}r^{-d} M(r,\theta)\geq 
\frac{\alpha(\theta)-\alpha(\theta-\epsilon)}{\epsilon}.$$
Since both these equalities must hold for all $\epsilon>0$, the
lemma follows from the assumption 
that $\alpha$ is differentiable at $\theta$.
\end{proof}

The following proposition follows from Proposition \ref{p:avg1},
but is stronger as it does not require averaging in the $\theta'$ 
variables.
\begin{prop} \label{p:avg2}
Let $V$, $z_0$, $\Omega$ satisfy the assumptions of Theorem \ref{thm:first}, 
and $\rho>0$, $\Omega_\rho$ be as in Proposition \ref{p:avg1}.
Then for $z\in \Omega_{2 \rho}$, $0<\theta<\pi$, as $r\rightarrow \infty$
\begin{multline*}
\frac{1}{\Vol(B(z,\rho))}\int_{z'\in B(z,\rho)} 
N_{V(z')}(r,\pi,\theta+\pi)d\mathcal{L}(z') \\ 
= \frac{1}{2\pi}a^d r^d\left( \frac{1}{d^2}h_d'(\theta)+
\int_0^{\theta} h_d(\omega)d\omega \right) +o(r^d).
\end{multline*}
\end{prop}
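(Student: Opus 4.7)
The plan is to derive Proposition \ref{p:avg2} from Proposition \ref{p:avg1} by a direct application of Lemma \ref{l:noavg}, which is precisely designed for this sort of ``deaveraging'' operation.

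First I would set
\[
M(r,\theta) \defeq \frac{1}{\Vol(B(z,\rho))}\int_{z'\in B(z,\rho)}
N_{V(z')}(r,\pi,\theta+\pi)\,d\mathcal{L}(z'),
\]
so that Proposition \ref{p:avg1} reads
\[
\int_0^\theta M(r,\theta')\,d\theta' = \frac{a^d r^d}{2\pi}\left(\frac{1}{d^2}h_d(\theta) + \int_0^\theta\int_0^{\theta'} h_d(\omega)\,d\omega\,d\theta'\right) + o(r^d)
\]
for each fixed $z\in \Omega_{2\rho}$ and $\theta\in(0,\pi)$. Thus, setting $\alpha(\theta) \defeq \frac{a^d}{2\pi}\bigl(\frac{1}{d^2}h_d(\theta)+\int_0^\theta\int_0^{\theta'} h_d(\omega)\,d\omega\,d\theta'\bigr)$, we have $r^{-d}\int_0^\theta M(r,\theta')\,d\theta' \to \alpha(\theta)$ as $r\to \infty$.

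Next I would verify the two hypotheses required by Lemma \ref{l:noavg}. For the monotonicity in $\theta$: for fixed $r$ and $z'$, the function $\theta\mapsto N_{V(z')}(r,\pi,\theta+\pi)$ is the integrated counting function over a widening sector in the lower half plane, hence non-decreasing in $\theta$; averaging over $z'\in B(z,\rho)$ preserves this, so $M(r,\theta)$ is non-decreasing in $\theta$ for each $r$. For the differentiability of $\alpha$: by Lemma \ref{l:hdbasics}, $h_d$ is $C^1$ on $(0,\pi)$, so $\alpha$ is differentiable on $(0,\pi)$ with
\[
\alpha'(\theta) = \frac{a^d}{2\pi}\left(\frac{1}{d^2}h_d'(\theta) + \int_0^\theta h_d(\omega)\,d\omega\right).
\]

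Finally I would invoke Lemma \ref{l:noavg} to conclude that $r^{-d}M(r,\theta)\to \alpha'(\theta)$, which is exactly the claim of Proposition \ref{p:avg2}. There is no real obstacle here: the structural work (plurisubharmonicity, application of Proposition \ref{p:l-g}, and use of Stefanov's bounds) has already been done inside Proposition \ref{p:avg1}, and the remaining step is the soft monotonicity-plus-differentiation argument encapsulated in Lemma \ref{l:noavg}. The only point to double-check is that $\alpha$ is genuinely differentiable at every $\theta\in(0,\pi)$ under consideration, which is guaranteed by Lemma \ref{l:hdbasics}.
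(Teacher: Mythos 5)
Your proof is correct and follows exactly the route the paper takes: the paper's own (one-line) proof says Proposition \ref{p:avg2} ``follows from applying Lemmas \ref{l:noavg} and \ref{l:hdbasics} to the results of Proposition \ref{p:avg1},'' and you have simply spelled out the verification of the monotonicity and differentiability hypotheses that the paper leaves implicit.
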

\begin{proof}
This follows from applying Lemmas \ref{l:noavg}
and \ref{l:hdbasics} to the results of Proposition 
\ref{p:avg1}.
\end{proof}

Proposition \ref{p:avg2} does not give results for the counting 
function for all the resonances (note that we cannot have $\theta=\pi$).
The following fills this gap.
\begin{prop} \label{p:totalavg}
Let $V$, $z_0$, $\Omega$ satisfy the assumptions of Theorem \ref{thm:first},
 and 
$\rho>0$,  $\Omega_\rho$ as in Proposition \ref{p:avg1}.
Then for $z\in \Omega_{2\rho}$, as $r\rightarrow \infty$
$$
\frac{1}{\Vol(B(z,\rho))}\int_{z'\in B(z,\rho)} 
N_{V(z')}(r)d\mathcal{L}(z')
= \frac{1}{2\pi}a^d r^d
\int_0^{\theta} h_d(\omega)d\omega  +o(r^d).$$
\end{prop}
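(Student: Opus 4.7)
The plan is to mirror the proof of Proposition \ref{p:avg1}, with the $\theta$-averaging step replaced by the identity (\ref{eq:almostsame}), which writes $\int_0^r \tfrac{n_V(t)}{t}\,dt$ (hence $N_V(r)$, up to a trivial $n_V(0)\ln r$ term) directly as $\int_0^\pi \ln|s_V(re^{i\theta})|\,d\theta$ modulo an $O(r^{d-1})$ error that is uniform in $V$ when $\|V\|_\infty$ and $\supp V$ are bounded. As in Lemma \ref{l:intangles}, I would first replace $s_{V(z')}$ by $f_{z'}=B_{z'}\cdot s_{V(z')}$, where $B_{z'}$ is the Blaschke-type rational factor that cancels the (uniformly in $z'\in\overline{\Omega_\rho}$ finitely many) upper half-plane poles of $s_{V(z')}$; this changes $\int_0^\pi \ln|s_{V(z')}|\,d\theta$ by $O(\log r)$ uniformly in $z'$. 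For $r>M$, where $M$ depends only on $d$, $\max_{\overline{\Omega_\rho}}\|V(z)\|_\infty$, and $\supp V$, set
\[
\Phi_1(z,r,\rho)\defeq \frac{1}{\vol(B(z,\rho))}\int_{B(z,\rho)}\int_0^\pi \ln|f_{z'}(re^{i\theta})|\,d\theta\,d\mathcal{L}(z').
\]
By \cite[Prop.\ I.14]{l-g}, $\Phi_1(\,\cdot\,,r,\rho)$ is plurisubharmonic in $z\in\Omega_{2\rho}$, and by (\ref{eq:almostsame}) it differs from the left-hand side of the proposition by $O(r^{d-1})$ uniformly in $z\in\overline{\Omega_{2\rho}}$.

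For the upper bound I would invoke (\ref{eq:stupperbd}), valid for complex-valued $V$ and uniform in $V$ with $\|V\|_\infty$ and $\supp V$ bounded: $N_{V(z')}(r)\leq \tfrac{c_d a^d}{d}\,r^d + o(r^d)$ uniformly in $z'\in\overline{\Omega_\rho}$. Averaging over $B(z,\rho)$ and using the displayed identification gives
\[
\limsup_{r\to\infty} r^{-d}\Phi_1(z,r,\rho) \leq \frac{c_d a^d}{d}=\frac{a^d}{2\pi}\int_0^\pi h_d(\omega)\,d\omega \qquad\text{for every } z\in\Omega_{2\rho}.
\]
The matching lower bound at $z_0$ comes from the hypothesis $V(z_0)\in\mfm_a$: $r^{-d}N_{V(z_0)}(r)\to c_d a^d/d$, which via (\ref{eq:almostsame}) reads $r^{-d}\int_0^\pi \ln|f_{z_0}(re^{i\theta})|\,d\theta\to c_d a^d/d$. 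Since $z'\mapsto \int_0^\pi \ln|f_{z'}(re^{i\theta})|\,d\theta$ is plurisubharmonic, the submean-value inequality at $z_0$ yields $\Phi_1(z_0,r,\rho)\geq \int_0^\pi \ln|f_{z_0}(re^{i\theta})|\,d\theta$, hence $\liminf r^{-d}\Phi_1(z_0,r,\rho)\geq c_d a^d/d$ and therefore $\lim r^{-d}\Phi_1(z_0,r,\rho) = c_d a^d/d$.

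The conclusion is then spread to all of $\Omega_{2\rho}$ exactly as at the end of the proof of Proposition \ref{p:avg1}. Along any sequence $r_j\to\infty$, the plurisubharmonic functions $r_j^{-d}\Phi_1(\,\cdot\,,r_j,\rho) - c_d a^d/d$ are uniformly bounded above by a sequence tending to $0$ and have $\limsup$ equal to $0$ at $z_0$, so \cite[Prop.\ 1.39]{l-g} (Proposition \ref{p:l-g}) tells us the set where this $\limsup$ is strictly negative is pluripolar; uniform continuity of $r^{-d}\Phi_1(\,\cdot\,,r,\rho)$ in $z$, inherited from the uniform bound $0\leq n_{V(z')}(r)\leq c_d a^d r^d + o(r^d)$, then rules out any exceptional point since a pluripolar set contains no open ball. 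This gives $\lim r^{-d}\Phi_1(z,r,\rho) = c_d a^d/d$ for every $z\in\Omega_{2\rho}$, which via (\ref{eq:almostsame}) yields the proposition. The proof is conceptually identical to that of Proposition \ref{p:avg1}; the main, purely technical, point will be tracking the uniformity of the $O(r^{d-1})$ error in (\ref{eq:almostsame}) and the uniform finiteness of the upper half-plane poles of $s_{V(z')}$ as $z'$ varies over $\overline{\Omega_\rho}$, both of which follow from the explicit constant dependencies in Lemma \ref{l:derivest}, (\ref{eq:simplebd}), and the standard bound on the number of bound states in terms of $\|V\|_\infty$ and $\supp V$.
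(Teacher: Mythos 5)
Your proof follows the paper's approach: use a Jensen-type formula to rewrite the ball-average of $N_{V(z')}(r)$ as the ball-average of $\frac{1}{2\pi}\int_0^\pi\ln|s_{V(z')}(re^{i\theta})|\,d\theta$ modulo $O(r^{d-1})$, observe the result is plurisubharmonic in $z$, get the upper bound from Stefanov's estimate and the lower bound at $z_0$ from the submean-value inequality together with $V(z_0)\in\mfm_a$, and then spread the equality across $\Omega_{2\rho}$ via Proposition \ref{p:l-g} and the equicontinuity of the averaged counting functions. This is exactly how the paper handles it (the paper's own proof is essentially a pointer back to Proposition \ref{p:avg1} with (\ref{eq:frid}) in place of Lemma \ref{l:newgfj}).

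One technical slip in the execution is worth flagging. You define $\Phi_1$ using $\ln|f_{z'}|$, where $f_{z'}=B_{z'}\cdot s_{V(z')}$ carries the Blaschke-type factor removing the finitely many upper half-plane poles of $s_{V(z')}$, and then invoke \cite[Prop.\ I.14]{l-g} for plurisubharmonicity of $\Phi_1(\cdot,r,\rho)$. But for fixed $\lambda=re^{i\theta}$, the map $z'\mapsto\ln|f_{z'}(\lambda)|$ is not obviously plurisubharmonic: the pole locations $\lambda_j(z')$ are not holomorphic in $z'$, and their number can even jump as $z'$ varies, so $B_{z'}(\lambda)$ is not a holomorphic function of $z'$. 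The paper's $\Psi_1$ is built directly from $\ln|s_{V(z')}(re^{i\theta})|$, which \emph{is} plurisubharmonic in $z'$ once $r$ is large enough that $re^{i\theta}$ avoids all upper half-plane poles (uniformly in $z'$ over compacts), since then $s_{V(z')}(\lambda)$ is holomorphic in $z'$. The remedy is simply to put $s_{V(z')}$ in place of $f_{z'}$ in the definition of $\Phi_1$; since $\int_0^\pi\ln|B_{z'}(re^{i\theta})|\,d\theta$ is bounded uniformly in $z'$ for $r$ large (in fact it is $O(1/r)$, a sharper estimate than the $O(\log r)$ you quote), the discrepancy is absorbed into the $O(r^{d-1})$ error and the rest of your argument goes through unchanged.
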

\begin{proof}
The proof of this is very similar to that of Proposition \ref{p:avg1}.
In fact, the main difference is the use of (\ref{eq:frid}), which together
with Lemma \ref{l:derivest} and (\ref{eq:simplebd})  gives us by handling
possible poles in the upper half plane using 
 a method similar to the proof of Lemma \ref{l:intangles},
$$\frac{1}{\Vol(B(z,\rho))}\int_{z'\in B(z,\rho)} 
N_{V(z')}(r)d\mathcal{L}(z')= \Psi_1(z,r,\rho)+O(r^{d-1})$$
where
$$\Psi_1(z,r,\rho)= \frac{1}{\Vol(B(z,\rho))}\frac{1}{2\pi} 
\int_{z'\in B(z,\rho)}\int_0^{\pi} \ln |s_{V(z')}(r e^{i\theta})|d\theta d\mathcal{L}(z').
$$
Using that $\Psi_1$ is plurisubharmonic in $z$, the proof now follows
just as in Proposition \ref{p:avg1}.
\end{proof}

The following proposition is much like
 Propositions \ref{p:avg2} and \ref{p:totalavg}, but eliminates the 
 average in the $r$ variable.
\begin{prop}\label{p:lessavg}
Let $V,\; \Omega,\; z_0$ satisfy the conditions of 
Theorem \ref{thm:first}, 
and let $\rho$, $\Omega_\rho$ be as in Proposition \ref{p:avg1}.
Then for 
 $0<\theta<\pi$, $z\in \Omega_{2\rho}$,
$$\frac{1}{\Vol(B(z,\rho))}\int_{z'\in B(z,\rho)}n_{V(z')}(r,\pi,\theta+\pi)
d\mathcal{L}(z')= \frac{a^dr^d}{2\pi} \left(\frac{1}{d}h_d'(\theta)
+ d \int_0^\theta h_d(\theta)d\theta\right) +o(r^d)
$$
and 
$$\frac{1}{\Vol(B(z,\rho))}\int_{z'\in B(z,\rho)}n_{V(z')}(r) d\mathcal{L}(z')
=\frac{d}{2\pi} a^d r^d \int_0^\pi h_d(\theta)d\theta + o(r^d)$$
as $r\rightarrow \infty$.
\end{prop}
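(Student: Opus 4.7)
My plan is to derive both assertions from the ball-averaged asymptotics for $N_{V(z')}$ already proved in Propositions \ref{p:avg2} and \ref{p:totalavg}, combined with a Tauberian argument exploiting the monotonicity of the resonance-counting functions in $r$. This converts the integrated asymptotics into pointwise asymptotics.

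For the first assertion, define
$$G(r)\defeq \frac{1}{\Vol(B(z,\rho))}\int_{z'\in B(z,\rho)} n_{V(z')}(r,\pi,\theta+\pi)\,d\mathcal{L}(z').$$
Since $n_{V(z')}(r,\pi,\theta+\pi)$ is non-decreasing in $r$ for each fixed $z'$, so is $G$. Fubini's theorem and the definition of $N$ give
$$\int_0^r \frac{G(t)-G(0)}{t}\,dt = \frac{1}{\Vol(B(z,\rho))}\int_{z'\in B(z,\rho)} N_{V(z')}(r,\pi,\theta+\pi)\,d\mathcal{L}(z'),$$
so since $G(0)\ln r = o(r^d)$, Proposition \ref{p:avg2} yields
$$\int_0^r \frac{G(t)}{t}\,dt = \frac{a^d r^d}{2\pi}\left(\frac{1}{d^2}h_d'(\theta) + \int_0^\theta h_d(\omega)\,d\omega\right) + o(r^d).$$

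Next I run the analogue of Lemma \ref{l:noavg} in the $r$ variable. For $\epsilon\in(0,1)$, the monotonicity of $G$ gives
$$G(r)\ln(1+\epsilon) \leq \int_r^{r(1+\epsilon)} \frac{G(t)}{t}\,dt, \qquad G(r)\ln\frac{1}{1-\epsilon} \geq \int_{r(1-\epsilon)}^r \frac{G(t)}{t}\,dt.$$
Writing $C_\theta = \frac{a^d}{2\pi}\bigl(\frac{1}{d^2}h_d'(\theta) + \int_0^\theta h_d(\omega)\,d\omega\bigr)$, divide by $r^d$, send $r\to\infty$, then $\epsilon\downarrow 0$; using $\ln(1\pm\epsilon)=\pm\epsilon+O(\epsilon^2)$ and $(1\pm\epsilon)^d=1\pm d\epsilon+O(\epsilon^2)$, both one-sided bounds converge to $d\,C_\theta$, whence
$$\lim_{r\to\infty} r^{-d} G(r) = d\,C_\theta = \frac{a^d}{2\pi}\left(\frac{1}{d}h_d'(\theta) + d\int_0^\theta h_d(\omega)\,d\omega\right).$$

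The second assertion follows by the identical argument, using Proposition \ref{p:totalavg} in place of Proposition \ref{p:avg2} and the ball-average of $n_{V(z')}(r)$ in place of $n_{V(z')}(r,\pi,\theta+\pi)$. The main thing requiring care is the Tauberian step with the weight $1/t$; the logarithmic measure of $[r,r(1+\epsilon)]$ combines cleanly with the monotonicity of the counting function to produce the correct factor of $d$ between the integrated and pointwise asymptotics. No new complex-analytic input beyond Propositions \ref{p:avg2} and \ref{p:totalavg} is needed.
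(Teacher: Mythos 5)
Your proposal is correct and follows essentially the same route the paper sketches: it invokes Propositions \ref{p:avg2} and \ref{p:totalavg} and then removes the $r$-averaging by a Tauberian argument exploiting the monotonicity in $r$ of the ball-averaged counting function, which is exactly the role the paper assigns to ``a result like that of \cite[Lemma 1]{stefanov} or Proposition \ref{l:noavg}.'' The only addition you make is to spell out the Tauberian step, correctly showing that $\int_0^r G(t)\,dt/t = Cr^d + o(r^d)$ with $G$ non-decreasing forces $G(r) = dCr^d + o(r^d)$.
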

\begin{proof}
This proof follows from Propositions \ref{p:avg2} and \ref{p:totalavg},
using, in addition, a result like that of \cite[Lemma 1]{stefanov}
or Proposition \ref{l:noavg}.
\end{proof}

\noindent {\em Proof of Theorem \ref{thm:second}}.
Let $M=\max(1+|\psi(z)|)$, and for $\rho>0$ small enough
that $B(z_0,\rho)\subset \Omega$, set 
$\Omega_\rho$ to be the connected component of $\{z\in \Omega:
\dist(z,\Omega^c)>\rho\}$ which contains $z_0$. 
Given $\epsilon>0$, choose $\rho>0$ such that 
$B(z_0 , 2\rho)\subset \Omega$ and so that 
\begin{equation}
\vol(\supp \psi \cap (\Omega \setminus \Omega_{2\rho}))<\frac{\epsilon}
{10 M e(c_da^d+1)}.
\end{equation}
Since $\psi$ is continuous
with compact support, we can find a $\delta_1>0$, $\delta_1<\rho$
so that if $|z-z'|<\delta_1$, then 
$|\psi(z)-\psi(z')|<\frac{\epsilon}{10e(1+\vol \supp\psi)  (a^dc_d+1) }$.
%{5e  (a^d c_d+1) (1+\vol \supp \psi)^d}$.
We may find a finite number $J$ of disjoint balls $B(z_j,\epsilon_j)$ 
so that $\epsilon_j<\delta_1$, $z_j\subset \Omega_{2\rho}$, and  
$$\vol\left(\supp \psi \setminus (\cup_1^J B(z_j,\epsilon_j)\right)+ \vol
\left(\cup_1^J B(z_j,\epsilon_j) \setminus \supp \psi\right)
<\frac{\epsilon}{4 M e(a^d c_d+1)}.$$

Let $\pi\leq \varphi'\leq \theta'\leq 2\pi$.
Now 
\begin{align*}
& \int \psi(z) n_{V(z)}(r,\varphi',\theta') d\mathcal{L}(z) \\
& = \sum_{j=1}^J \int_{B(z_j,\epsilon_j)}
 \psi(z) n_{V(z)}(r,\varphi',\theta') d\mathcal{L}(z)
+ \int_{\supp \psi \setminus(\cup B(z_j,\epsilon_j))}
 \psi(z) n_{V(z)}(r,\varphi',\theta') d\mathcal{L}(z).
\end{align*}
We will use that the bound (\ref{eq:stupperbd}) implies that
$n_V(z)\leq e c_d a^d r^d +o(r^d).$
By our choice of $B(z_j,\epsilon_j)$, 
$$\left|\int_{\supp \psi \setminus(\cup B(z_j,\epsilon_j))}
 \psi(z) n_{V(z)}(r,\varphi',\theta') d\mathcal{L}(z)\right| 
\leq \frac{\epsilon}{4}(r^d +o(r^d)).$$
By our choice of $\delta_1$ and
the assumption that  $\epsilon_j<\delta_1$, we have
\begin{multline*}
\left| \sum_{j=1}^J \int_{B(z_j,\epsilon_j)}
 \psi(z) n_{V(z)}(r,\varphi',\theta') d\mathcal{L}(z)
- \sum_{j=1}^J \int_{B(z_j,\epsilon_j)}
 \psi(z_j) n_{V(z)}(r,\varphi',\theta') d\mathcal{L}(z) \right| \\
\leq \frac{\epsilon}{5}(r^d+o(r^d)).
\end{multline*}
By Proposition \ref{p:lessavg}, if $0<\theta<\pi$,
\begin{multline*}
\sum_{j=1}^J \int_{B(z_j,\epsilon_j)}
 \psi(z_j) n_{V(z)}(r,\pi,\pi+\theta) d\mathcal{L}(z)
\\= \left( \sum_{j=1}^J 
 \psi(z_j)\vol(B(z_j,\epsilon_j))\right)
\frac{1}{2\pi}a^d r^d\left(\frac{1}{d} h_d'(\theta)+ 
d \int_0^{\theta} h_d(\omega)d\omega \right) + o(r^d),
\end{multline*}
and 
$$ \sum_{j=1}^J \int_{B(z_j,\epsilon_j)}
 \psi(z_j) n_{V(z)}(r)d\mathcal{L}(z)
= \left( \sum_{j=1}^J 
 \psi(z_j)\vol(B(z_j,\epsilon_j))\right)
\frac{d}{2\pi}a^d r^d \int_0^\pi h_d(\omega)d\omega + o(r^d).
$$
Again using our choice of $\delta_1$, $z_j$, and $\epsilon_j$, we have
$$ \left| \sum_{j=1}^J 
 \psi(z_j)\vol(B(z_j,\epsilon_j)) -
\int \psi(z)d\mathcal{L}(z)\right|<\frac{2\epsilon}{5(c_da^d+1)}.$$
Thus we have shown that given $\epsilon>0$, 
if $0<\theta<\pi$,
\begin{multline}
\left|\int \psi(z) n_{V(z)}(r,\pi,\theta+\pi) d\mathcal{L}(z)
-
\frac{a^d r^d}{2\pi}\int\psi(z)d\mathcal{L}(z)\left(\frac{1}{d} h_d'(\theta)+ 
d \int_0^{\theta} h_d(\omega)d\omega \right)\right| \\
\leq  \epsilon r^d +o(r^d)
\end{multline}
and 
\begin{equation}
\left|\int \psi(z) n_{V(z)}(r ) d\mathcal{L}(z)
- c_d a^d r^d\int \psi(z)d\mathcal{L}(z)
 \right|
\leq \epsilon r^d +o(r^d).
\end{equation}
Thus we have 
proved the first and third statements of the theorem.  The second statement
of the theorem follows from the other two.
%\endpf

\section{Proof of Theorem \ref{thm:first}}
\label{s:proofthmfirst}

This proof uses some ideas similar to those used in the proofs of 
Propositions \ref{p:avg1} and \ref{p:totalavg}.  In fact, because the
proofs are so similar we shall only give an outline.

Note that by (\ref{eq:frid}), (\ref{eq:simplebd}),
and 
and    Lemma \ref{l:derivest},
using an argument similar to the proofs of Lemma \ref{l:intangles}
and Proposition \ref{p:avg1}, 
$$N_{V(z)}(r)=\Psi(z,r)+o(r^{d-1})$$
where
$$\Psi(z,r)=\frac{1}{2\pi} \int_0^\pi \ln |s_{V(z)}(re^{i\theta})|d\theta$$
is, for fixed (large) $r$ a plurisubharmonic function of $z\in \tilde{\Omega}
\Subset\Omega$.  
Since 
$$\lim \sup_{r\rightarrow \infty} r^{-d} \Psi(z,r)
\leq  \frac{a^d}{2\pi}\int_0^\pi h_d(\theta)d\theta$$ and this 
maximum is achieved at $z=z_0\in \Omega$, we get the first part of the Theorem
by applying \cite[Proposition 1.39]{l-g}, recalled in Proposition 
\ref{p:l-g}.

To obtain the second part, note that as in the proof of Proposition
\ref{p:avg1}, for $0<\theta<\pi$,
$$\int_0^\theta N_{V(z)}(r,\pi,\theta'+\pi)d\theta'
= \Psi_2(z,r,\theta)+o(r^d)$$
where 
$$\Psi_2(z,r,\theta)=\frac{1}{2\pi}
\int_M^r J_{s_V(z)}^t(\theta)\frac{dt}{t}  \\
 +\frac{1}{2\pi} \int_0 ^\theta 
\int_0 ^{\theta'} \ln |s_{V(z)}(r e^{i\omega})|d\omega 
d\theta'.$$
Since this is a plurisubharmonic function of $z\in \tilde{\Omega}$,
$\tilde{\Omega}\Subset \Omega$,
if $M$ is chosen so that 
$M\geq  2 \alpha_d \max_{z\in \overline{\tilde{\Omega}}}\| V \|_{\infty}$,
a similar argument as in
the proof of Proposition \ref{p:avg1}
 shows that there exists  a pluripolar set
$E_\theta \subset \Omega$ so 
that 
$$2\pi \lim \sup_{r\rightarrow \infty}r^{-d}\Psi_2(z,r,\theta)
=a^d\left( \frac{1}{d^2}h_d(\theta)+\int_0^\theta \int_0^{\theta'}h_d(\omega)
d\omega d\theta'\right)$$
for all $z\in \Omega \setminus E_\theta$.  Again, we use that 
this equality holds when $z=z_0$.
Note that if the second part of the theorem can be proved for 
a small $\theta_0$, it is proved for all $\theta $ with $\theta \geq \theta_0$.
Thus, it is most interesting for small $\theta$.  Choose $\theta>0$ 
sufficiently small that $h_d(\theta)\geq \theta h_d'(0+)/2$, where
we denote $\lim _{\epsilon \downarrow 0 } h_d'(\epsilon)=h_d'(0+).$  Note
that $h_d(\theta)\geq 0$.  
Now, if 
\begin{align*}
\lim \sup_{r\rightarrow \infty} r^{-d} \int_0^\theta
N_V(r,\pi,\pi+\theta')d\theta'&  = \frac{a^d}{2\pi}
\left( \frac{1}{d^2} h_d(\theta)+\int_0^\theta\int_0^{\theta'} h_d(\omega)
d\omega d\theta'
\right)\\
&  \geq \frac{a^d}{4\pi d^2} h_d'(0+)\theta,
\end{align*}
then since $N_V(r,\pi,\pi)=O(1)$, we must have 
$$\lim \sup_{r\rightarrow \infty} r^{-d} 
N_V(r,\pi,\pi+\theta)  \geq \frac{a^d}{4\pi d^2} h_d'(0+).$$

\end{document}